\providecommand{\tabularnewline}{\\}
\numberwithin{equation}{section}
\numberwithin{figure}{section}
\numberwithin{table}{section}
\theoremstyle{plain}
\newtheorem{thm}{\protect\theoremname}[section]
  \theoremstyle{definition}
  \newtheorem{defn}[thm]{\protect\definitionname}
  \theoremstyle{plain}
  \newtheorem{algorithm}[thm]{\protect\algorithmname}
  \theoremstyle{remark}
  \newtheorem{rem}[thm]{\protect\remarkname}
  \theoremstyle{plain}
  \newtheorem{prop}[thm]{\protect\propositionname}
\newcommand{\cone}{\mbox{\rm cone}}
\newcommand{\mspan}{\mbox{\rm span}}
  \providecommand{\algorithmname}{Algorithm}
  \providecommand{\definitionname}{Definition}
  \providecommand{\propositionname}{Proposition}
  \providecommand{\remarkname}{Remark}
\providecommand{\theoremname}{Theorem}
\begin{document}
\title[Set intersection problem algorithms]{Set intersection problems: Integrating projection and quadratic programming algorithms} 

\subjclass[2010]{90C25, 90C20, 47J25, 52A20}
\begin{abstract}
The Set Intersection Problem (SIP) is the problem of finding a point
in the intersection of convex sets. This problem is typically solved
by the method of alternating projections. To accelerate the convergence,
the idea of using Quadratic Programming (QP) to project a point onto
the intersection of halfspaces generated by the projection process
was discussed in earlier papers. This paper looks at how one can integrate
projection algorithms together with an active set QP algorithm. As
a byproduct of our analysis, we show how to accelerate an SIP algorithm
involving box constraints, and how to extend a version of the Algebraic
Reconstruction Technique (ART) while preserving finite convergence.
Lastly, the warmstart property of active set QP algorithms is a valuable
property for the problem of projecting onto the intersection of convex
sets.
\end{abstract}

\author{C.H. Jeffrey Pang}

\curraddr{Department of Mathematics\\ 
National University of Singapore\\ 
Block S17 08-11\\ 
10 Lower Kent Ridge Road\\ 
Singapore 119076 }

\email{matpchj@nus.edu.sg}

\date{\today{}}

\keywords{quadratic programming, projection algorithms, feasibility problems,
algebraic reconstruction technique.}

\maketitle
\tableofcontents{}

\section{Introduction}

For finitely many closed convex sets $K_{1},\dots,K_{r}$ in a Hilbert
space $X$, the \emph{Set Intersection Problem }(SIP) is stated as:
\begin{equation}
\mbox{(SIP):}\quad\mbox{Find }x\in K:=\bigcap_{i=1}^{r}K_{i}\mbox{, where }K\neq\emptyset.\label{eq:SIP}
\end{equation}
 One assumption on the sets $K_{i}$ is that projecting a point in
$X$ onto each $K_{i}$ is a relatively easy problem. 

A popular method of solving the SIP is the \emph{Method of Alternating
Projections }(MAP), where one iteratively projects a point through
the sets $K_{i}$ to find a point in $K$. Such methods fall under
the broad class of \emph{projection algorithms}. The rate of convergence
of the MAP is linear, and is known to be slow when certain angles
are small. Acceleration techniques include \cite{GPR67,GK89,BDHP03}.
For more on the background and recent developments of the MAP and
its variants, we refer the reader to \cite{BB96_survey,BR09,EsRa11},
as well as \cite[Chapter 9]{Deustch01}, \cite[Subsubsection 4.5.4]{BZ05},
and \cite{Deutsch01_survey,Deutsch95}. Superlinear convergence can
be achieved if the smoothness of the sets $K_{i}$ are exploited \cite{G-P98,G-P01}.
Another acceleration method is the Dos Santos method \cite{DosSantos87,DePierro81},
which is based on Cimmino's method for linear equations \cite{Cimmino38}.

Another problem related to the SIP is the \emph{Best Approximation
Problem }(BAP): Find the closest point to $x_{0}$ in $K$, that is,
\begin{eqnarray}
\mbox{(BAP):}\quad & \underset{x\in X}{\min} & \|x-x_{0}\|\label{eq:Proj-pblm}\\
 & \mbox{s.t. } & x\in K:=\bigcap_{i=1}^{r}K_{i}.\nonumber 
\end{eqnarray}

The projection of a point onto a closed convex set generates a supporting
halfspace. One can then project an iterate onto the intersection of
these halfspaces using quadratic programming (QP). This gives us the
\emph{supporting halfspace and quadratic programming} (SHQP) strategy,
which is illustrated in Figure \ref{fig:alt-proj-compare}. In our
earlier paper on this topic, we mentioned the Dykstra's algorithm
\cite{Dykstra83,BD86} and the related paper \cite{Han88}. An alternative
to Dykstra's algorithm is the approach of \cite{Haugazeau68}, which
has been tried in \cite{Combettes_blockiter} for example. See also
\cite{BauschkeCombettes11}.

In \cite{cut_Pang12}, we proved that for the SIP, can, provided that
the point of convergence satisfies a linear regularity property, converge
to a point in the intersection of the sets \emph{multiple-term superlinearly.
}We say that a sequence $\{x_{i}\}$ converges to $\bar{x}$ \emph{multiple-term
superlinearly }if 
\[
\lim_{i\to\infty}x_{i}=\bar{x}\mbox{, and }\lim_{i\to\infty}\frac{\|x_{i+p}-\bar{x}\|}{\|x_{i}-\bar{x}\|}=0\mbox{ for some finite }p>0.
\]

In a follow-up \cite{improved_SIP}, we prove that this convergence
can actually be multiple-term quadratic if the sets satisfy a second
order regularity at the point of convergence, and how one can take
fewer halfspaces and still achieve fast convergence. These results
generalize those of \cite{G-P98,G-P01}, and give an indication that
the SHQP strategy is a good idea to accelerate the convergence of
projection algorithms, even though one may have to solve smaller QPs
formed from fewer halfspaces than what can be proved to achieve the
fast convergence. Particular cases of the SHQP strategy had been
studied before. In \cite{Pierra84}, Pierra suggested an extrapolation
using this idea when one of the sets is an affine space. This idea
was further studied in \cite{BausCombKruk06} as the EAPM.

For the case when the sets $K_{i}$ are affine spaces, the early papers
\cite{Neumann50,Halperin62} show that the MAP converges strongly
to the solution of the BAP. We discuss more about how to handle an
SIP with affine constraints in Remark \ref{rem:aff-constraints}.

It appears that prevailing algorithms for the SIP (see for example
the algorithms in \cite{CensorChenCombettesDavidiHerman12,EsRa11})
do not exploit smoothness of the sets and fall back to a Newton-like
method and achieve superlinear convergence in the manner of \cite{G-P98,G-P01}
and the algorithms of this paper.

\begin{figure}[h]
\includegraphics[scale=0.4]{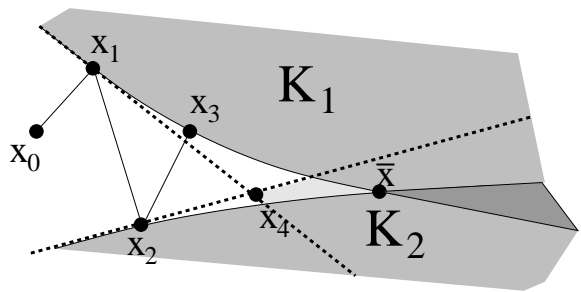}

\caption{\label{fig:alt-proj-compare}The method of alternating projections
on two convex sets $K_{1}$ and $K_{2}$ in $\mathbb{R}^{2}$ with
starting iterate $x_{0}$ arrives at $x_{3}$ in three iterations.
The point $x_{4}$ is the projection of $x_{1}$ onto the intersection
of halfspaces generated by projecting onto $K_{1}$ and $K_{2}$ earlier.
One can see that $d(x_{4},K_{1}\cap K_{2})<d(x_{3},K_{1}\cap K_{2})$,
illustrating the potential of the SHQP (supporting halfspace and quadratic
programming) strategy elaborated in \cite{cut_Pang12,improved_SIP}. }
\end{figure}

\subsection{Finite convergence of the ART}

The ART (Algebraic Reconstruction Technique) was originally designed
in \cite{GordonBenderHerman70ART} to find a solution to $Ax=b$ arising
in computerized tomography, where the rows of $A$ are formed during
the reconstruction process. This method is also known as the Kaczmarz
method \cite{Kaczmarz} in numerical linear algebra. The ART and Kaczmarz
method are updated by a projection method. 

A variant of the ART \cite{Herman1975}, referred to as ART3, finds
an $x\in\mathbb{R}^{n}$ that satisfies the linear inequality $L\leq Ax\leq U$.
If the set of points feasible in this system has nonempty interior
in $\mathbb{R}^{n}$, then ART3 finds such an $x$ in finitely many
iterations. The ART3 was designed to solve a reconstruction problem
from noisy x-rays.

\subsection{Algorithms for Quadratic Programs}

Algorithms for QPs have analogous counterparts to algorithms for linear
programs (LPs). Ellipsoid and Interior point methods for QPs and LPs
achieve polynomial time convergence. Another algorithm for LP is the
simplex method, which has exponential worst case behavior but is known
to perform well in practice. The simplex method has another advantage
over interior point methods in that it can make use of warmstart solutions
to solve the new problems created when constraints are added or deleted. 

The dual active set QP algorithm of \cite{Goldfarb_Idnani} can make
use of warmstart solutions to solve QPs when new linear inequality
constraints are added. We shall refer to their algorithm as the \emph{GI
algorithm}. This property of the GI algorithm makes it desirable to
solve the QPs arising from the BAP, since the projection process continually
adds new halfspaces bounding the intersection of the sets. Though
active set methods for QPs are not as extensively tested as that for
LPs, it is believed that, just like the simplex method for LPs, active
set methods can achieve good performance in practice, but have bad
worst case behavior.

\subsection{\label{sub:contributions}Contributions of this paper}

The theoretical analysis and possible implementation of the SHQP strategy
for the BAP and SIP have contrasting requirements. The multiple-term
fast convergence of the SHQP strategy for the SIP requires that a
huge number of recently produced halfspaces be used to form the QP.
On the other hand, practical concerns of solving a QP require that
the number of halfspaces used to form the QP cannot be too large. 

This paper looks at how one can integrate parts of the GI algorithm
into projection algorithms for the SIP and BAP. The GI algorithm is
chosen because it, being an active set algorithm, makes good use of
warmstart solutions when new inequality constraints are added (by
the projection process or by revisiting old constraints).  The GI
algorithm chooses to keep active halfspaces, which appears to be the
best choice of halfspaces to keep, and updates the QR factorization
of their normals for an effective solution of the QP.

To design our algorithms for the BAP and SIP, we isolate a step in
the algorithm in the dual active set QP algorithm of \cite{Goldfarb_Idnani},
calling it the inner GI step (Algorithm \ref{alg:Inner-GI}). The
additional work of the inner GI step is to update the KKT multipliers
and QR factorization of the normals of the active set. This additional
work is not too much more expensive than a projection if the number
of normals involved in the QR factorization is small (See Remark \ref{rem:cost_GI_step}),
and can be worth the effort when it accelerates convergence. For example,
multiple-term superlinear/ quadratic convergence is possible under
added conditions as proved in \cite{improved_SIP}, and such convergence
is also observed in our simple example in Section \ref{sec:simple-exa}.
In the case of the SIP, an inner GI step tries to move from one iterate
to the next by sliding along the halfspaces of some (possibly suboptimal)
choice of active constraints as far as possible. This is illustrated
in Figures \ref{fig:sliding} and \ref{fig:more_sliding}.

\begin{figure}[!h]
\includegraphics[scale=0.35]{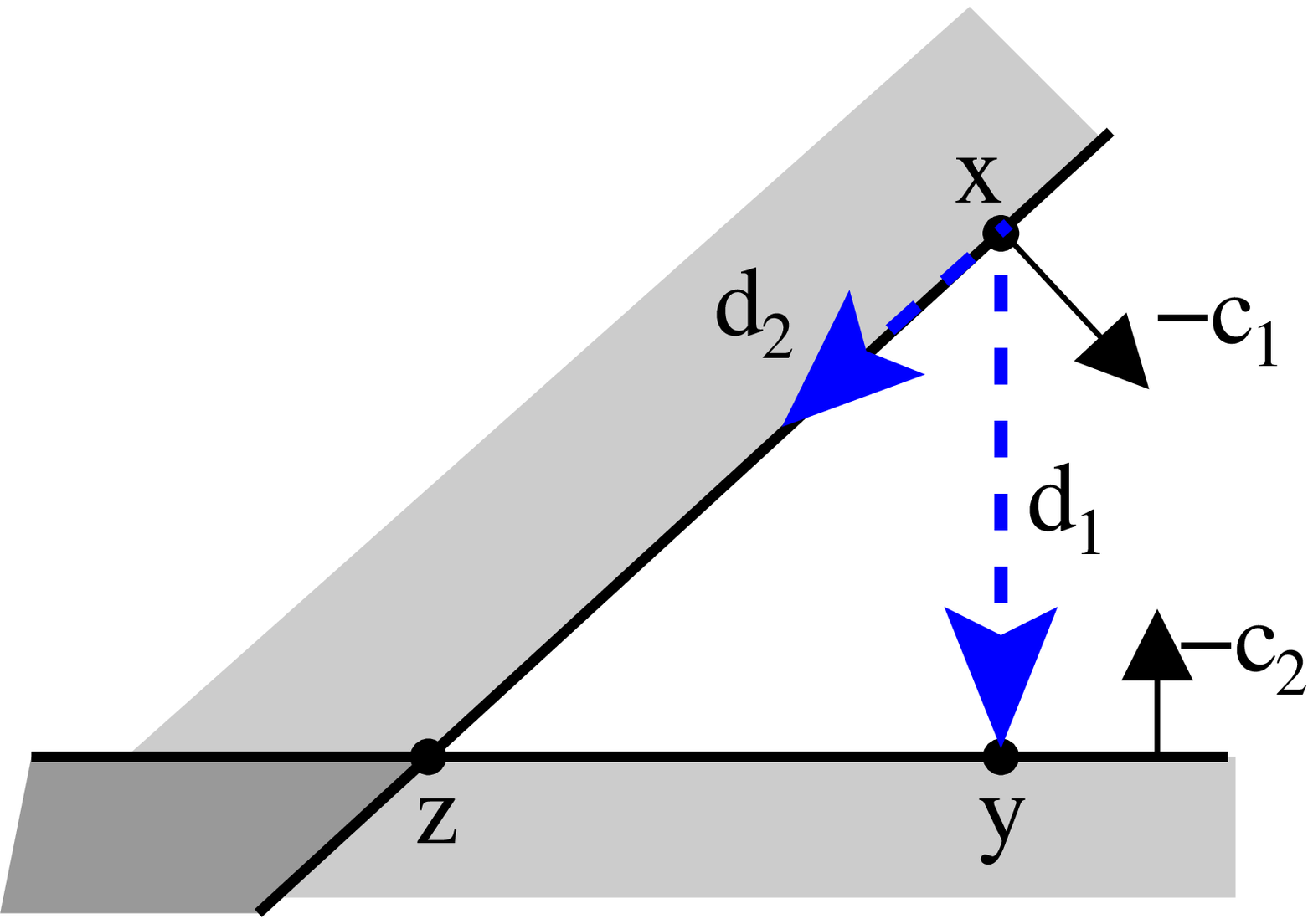}\includegraphics[scale=0.35]{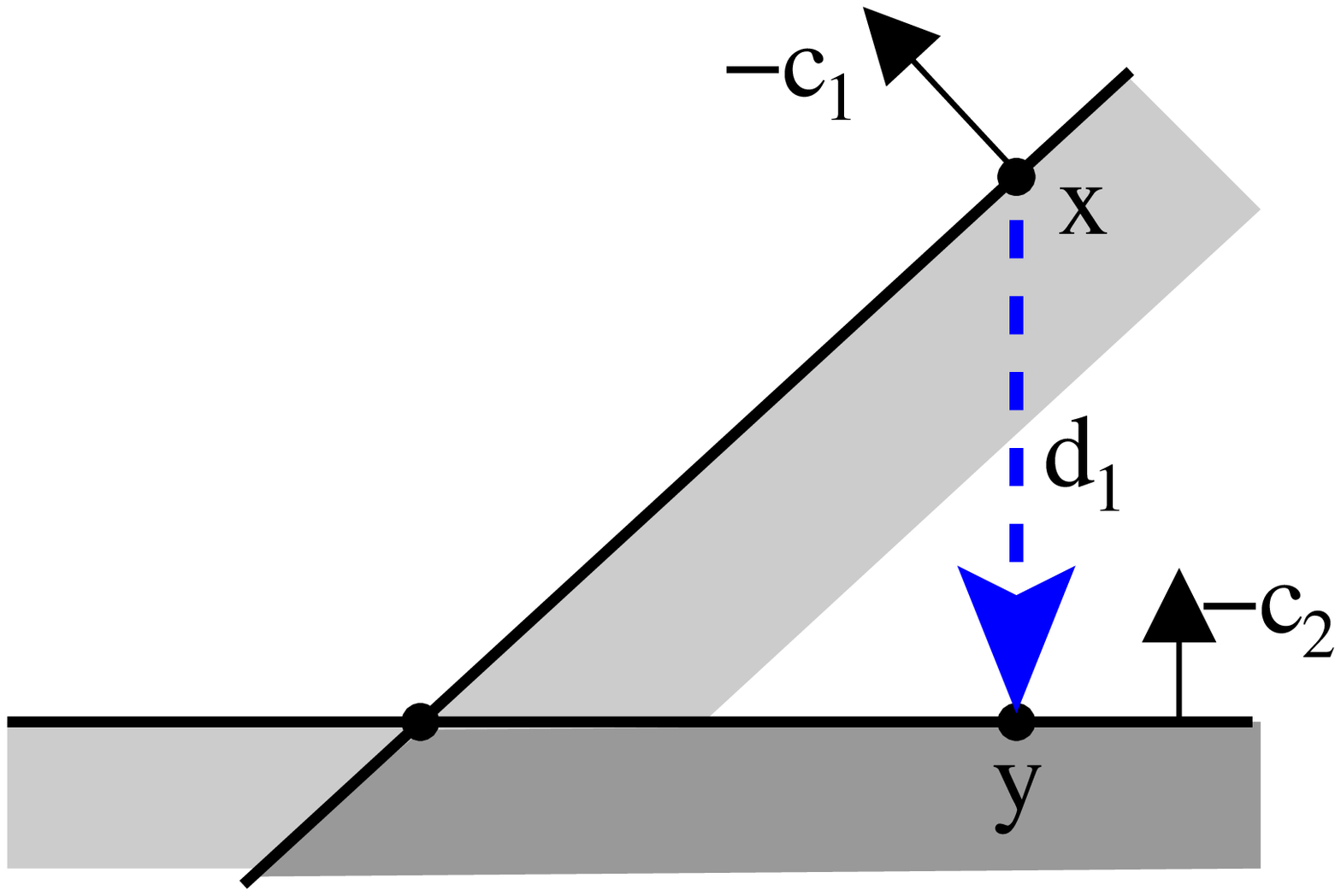}

\caption{\label{fig:sliding}In the diagram on the left, the projection of
$x$ onto some convex set produces a halfspace through $y$ with normal
$-c_{2}$. If we have prior information that $x$ lies on the boundary
of a halfspace with normal $-c_{1}$, then it is better to slide in
the direction $d_{2}$ along the boundary of the halfspace with normal
$-c_{1}$ and reach $z$. The point $z$ is also the projection of
$x$ onto the intersection of the two halfspaces, so this is one reason
why $z$ is a better iterate than $y$. Another reason why $z$ is
better than $y$ is that there are more halfspaces active at $z$
than at $y$, and one might be able to exploit this to improve convergence.
The diagram on the right shows that the constraint with normal $-c_{1}$
needs to be dropped in the next iteration. By integrating a QP algorithm
to a projection algorithm, one can take advantage of the situations
when the case on the left occurs.}

\end{figure}

The property of active set QP methods being able to effectively use
warmstart solutions to resolve a problem when new inequality constraints
are added is a useful feature for solving the BAP. See the description
of Algorithm \ref{alg:BAP-GI}. The GI algorithm is recommended over
other QP algorithms because even if the QPs were not solved fully,
the intermediate solutions of the GI algorithm improve as the number
of inner GI steps increases. See Remark \ref{rem:more_on_inner_GI}
and Figure \ref{fig:active-set}.

Noticing that the key to projecting onto a polyhedron using the inner
GI steps is to have a QR factorization of the normals of the active
constraints, we show an effective procedure for the SIP when one of
the sets is a box in Section \ref{sec:one-box}. Box constraints received
particular attention in \cite{CensorChenCombettesDavidiHerman12}.
We also point out how previous work \cite{BausCombKruk06,Pierra84},
\cite{HernandezRamosEscalanteRaydan2011} and \cite{Pang_subBAP}
can be applied to handle affine constraints.

In Section \ref{sec:modified-ART}, we extend the version of the ART
in \cite{Herman1975}, showing when one can use the SHQP ideas while
maintaining finite convergence. 

To summarize, this paper appears to be the first paper that studies
how one can integrate a QP algorithm and a projection algorithm for
solving the BAP and SIP. These techniques store the largest number
of the most useful halfspaces efficiently in order to obtain better
algorithms for the BAP and SIP. In particular, the new algorithms
can then fall back to a Newton-like method and achieve better than
linear convergence when the boundaries of the sets are sufficiently
smooth and satisfy a linear regularity property.

\subsection{Notation}

For a subset $S\subset\mathbb{R}^{n}$, we define $\mspan(S)$ to
be the \emph{span} of $S$ and $\cone(S)$ to be the \emph{conical
hull }of $S$ by 
\begin{eqnarray*}
\mspan(S) & := & \left\{ x:x=\sum\lambda_{i}s_{i},\,\lambda_{i}\in\mathbb{R},\, s_{i}\in S\right\} \\
\mbox{and }\cone(S) & := & \left\{ x:x=\sum\lambda_{i}s_{i},\,\lambda_{i}\geq0,\, s_{i}\in S\right\} .
\end{eqnarray*}
When $S$ is a matrix, $\mspan(S)$ and $\cone(S)$ are understood
to be the lineality space and conical hull generated by the columns
of $S$. Denote the projection of a point $x$ onto a closed convex
set $S$ to be $P_{S}(x)$. The QR factorization will always mean
the economy QR factorization. In other words, if $N\in\mathbb{R}^{n\times q}$,
then $N=QR$ for $Q\in\mathbb{R}^{n\times q}$ and $R\in\mathbb{R}^{q\times q}$.

\section{\label{sec:prelims}Preliminaries: Dual active set QP algorithm of
\cite{Goldfarb_Idnani}}

In this section, we recall the dual active set QP algorithm of \cite{Goldfarb_Idnani},
which will be the basis of the algorithms we discuss later. We also
refer to their algorithm as the GI algorithm. Their paper actually
treats the general case of a positive definite Hessian in the objective
function, but we shall just restrict to the case when the Hessian
is an identity matrix. 

\begin{figure}[!h]
\includegraphics[scale=0.3]{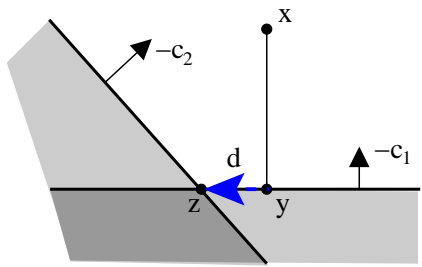}

\caption{This diagram illustrates the GI algorithm \cite{Goldfarb_Idnani}
to solve a quadratic program. The projection of x onto the halfspace
with normal $-c_{1}$ is $y$. When the halfspace that does not contain
$y$ is introduced, the GI algorithm finds a direction $d$ to move
from $y$ to $z$ (the projection of $x$ onto the intersection of
these two halfspaces) through what we call the inner GI step (Algorithm
\ref{alg:Inner-GI}). This direction $d$ is found by updating the
KKT multipliers. In Section 2, we recall the full details of the GI
algorithm, but omit derivations of its theoretical properties. For
more details, refer to \cite{Goldfarb_Idnani}, or even \cite{Idnani_thesis}.}

\end{figure}

For $x_{*}\in\mathbb{R}^{n}$, $C\in\mathbb{R}^{n\times m}$, and
$b\in\mathbb{R}^{m}$, consider the QP problem

\begin{eqnarray}
 & \min_{x\in\mathbb{R}^{n}} & v(x):=\frac{1}{2}\|x-x_{*}\|^{2}\label{eq:QP-pblm}\\
 & \mbox{s.t.} & C^{T}x\geq b.\nonumber 
\end{eqnarray}

We recall the GI algorithm and identify a step which we call the inner
GI step.
\begin{defn}
\label{def:s_tuple}(S-tuple) For the QP \eqref{eq:QP-pblm}, we say
that $(x,J,u,N,Q,R,q)$ is an \emph{s-tuple} if 
\begin{enumerate}
\item $J$ is an ordered subset of $\{1,\dots,m\}$ of size $q$ (and $q$
is an integer). 
\item $N\in\mathbb{R}^{q\times n}$ is such that the $i$th column of $N$
is the $J(i)$th column of $C$.
\item $(Q,R)$ is an economy QR factorization of $N$, where $Q\in\mathbb{R}^{n\times q}$
and $R\in\mathbb{R}^{q\times q}$. 
\item \cite{Goldfarb_Idnani} $(x,J)$ is an \emph{s-pair}, i.e., 

\begin{enumerate}
\item $x$ is the projection of $x_{*}$ onto $\{\tilde{x}:C_{J}^{T}\tilde{x}\geq b_{J}\}$,
and
\item $C_{J}^{T}x=b_{J}$. (i.e., $J$ is a subset of the constraints active
at $x$.)
\end{enumerate}
\item $u\in\mathbb{R}^{q}$ is the KKT multipliers certifying the optimality
of $x$, i.e.,

\begin{enumerate}
\item $u\geq0$, and
\item $x_{*}-x=-Nu$.
\end{enumerate}
\end{enumerate}
\end{defn}
The notion of the an s-pair in Definition \ref{def:s_tuple}(4) was
used in \cite{Goldfarb_Idnani}. We feel that it is useful to highlight
the other variables so that the computational costs of the dual active
set QP algorithm can be more easily calculated. 
\begin{algorithm}
\cite{Goldfarb_Idnani} \label{alg:GI-alg}(GI algorithm) This algorithm
finds the solution to the projection problem \eqref{eq:QP-pblm}.

\texttt{01} Set $J=\emptyset$, $x=x_{*}$, $q=0$, and set $u$,
$N$, $Q$ and $R$ to be empty vectors/ matrices

\texttt{02} $\quad$ so that $(x,J,u,N,Q,R,q)$ is an s-tuple.

\texttt{03} While $x$ is not feasible

\texttt{04} $\quad$ Find $p\in\{1,\dots,n\}$ such that $c_{p}^{T}x-b_{p}<0$

\texttt{05} $\quad$ Use the inner GI step to find an s-tuple $(x^{\prime},J^{\prime},u^{\prime},N^{\prime},Q^{\prime},R^{\prime},q^{\prime})$

\texttt{06} $\quad$ $\quad$ such that $v(x^{\prime})>v(x)$ or to
determine $\{\tilde{x}:C^{T}\tilde{x}\geq b\}=\emptyset$

\texttt{07} $\quad$ $\quad$ Let $(x,J,u,N,Q,R,q)\leftarrow(x^{\prime},J^{\prime},u^{\prime},N^{\prime},Q^{\prime},R^{\prime},q^{\prime})$

\texttt{08} end While.

\texttt{09} Return $(x,J,u)$.

\end{algorithm}
We go into more detail on the inner GI step.
\begin{algorithm}
\cite{Goldfarb_Idnani} \label{alg:Inner-GI}(Inner GI step) The \emph{Inner
GI step}, which is required in step 5 of Algorithm \ref{alg:GI-alg},
has the following inputs and outputs. We also list its full pseudocode
from \cite{Goldfarb_Idnani}.

\textbf{Inputs: }An s-tuple $(x,J,u,N,Q,R,q)$ and $p\notin J$ such
that $c_{p}^{T}x-b_{p}<0$.

\textbf{Outputs:} Either

(1) An s-tuple $(x^{\prime},J^{\prime},u^{\prime},N^{\prime},Q^{\prime},R^{\prime},q^{\prime})$
s.t. $J^{\prime}\subset J\cup\{p\}$ and $v(x^{\prime})>v(x)$, or

(2) A certificate of $\{\tilde{x}:C^{T}\tilde{x}\geq b\}$ being empty.

$\,$

Set $u^{+}=\left({u\atop 0}\right)$ to be a vector in $\mathbb{R}^{q+1}$.

\textbf{(a) Determine step direction}

Compute $z=[I-QQ^{T}]c_{p}$

If $q>0$, compute $r=R^{-1}Q^{T}c_{p}$

\textbf{(b) Compute step length}

\textbf{(i) Partial step length $t_{1}$ (maximum step in dual space
without violating dual feasibility)}

If $r\leq0$ or $q=0$, set $t_{1}\leftarrow\infty$

Otherwise, set 
\begin{equation}
t_{1}\leftarrow\min_{{r_{j}>0\atop j=1,\dots,q}}\left\{ \frac{u_{j}^{+}}{r_{j}}\right\} =\frac{u_{l}^{+}}{r_{l}}\label{eq:inner_GI_stuck}
\end{equation}

\textbf{(ii) Full step length $t_{2}$ (minimum step in primal space
such that the $p$th constraint becomes feasible.) }

If $|z|=0$, set $t_{2}\leftarrow\infty$.

Otherwise, set $t_{2}\leftarrow\frac{b_{p}-c_{p}^{T}x}{z^{T}c_{p}}$

\textbf{(iii) Step length $t$}

Set $t\leftarrow\min(t_{1},t_{2})$.

\textbf{(c) Determine new s-pair and take step}

\textbf{(i) No step in primal or dual space:}

If $t=\infty$, problem infeasible. Set $J^{\prime}\leftarrow J\cup\{p\}$.
We have $C_{J^{\prime}}\left({-r\atop 1}\right)=0$ and $\left({-r\atop 1}\right)\geq0$.

\textbf{(ii) Step in dual space:}

If $t_{2}=\infty$, then set $u^{+}\leftarrow u^{+}+t\left({-r\atop 1}\right)$,
and drop constraint $l$; i.e. set $J\leftarrow J\backslash\{J(l)\}$,
$q\leftarrow q-1$, update $Q$, $R$ and $N$ and go to step 2(a).

\textbf{(iii) Step in primal and dual space:}

Set $x\leftarrow x+tz$,

$u^{+}\leftarrow u^{+}+t\left({-r\atop 1}\right)$ .

If $t=t_{2}$, (full step), set $x^{\prime}\leftarrow x$, $u^{\prime}\leftarrow u^{+}$
and add constraint $p$; i.e. set $J^{\prime}\leftarrow J\cup\{p\}$,
$q^{\prime}\leftarrow q+1$, update $Q$, $R$ and $N$ to get $Q^{\prime}$,
$R^{\prime}$ and $N^{\prime}$, and inner GI step ends.

If $t=t_{1}$ (partial step), drop constraint $l$; i.e. set $J\leftarrow J\backslash\{J(l)\}$,
$q\leftarrow q-1$, update $Q$, $R$ and $N$ and go to step 2(a).

\end{algorithm}
\begin{rem}
(Certificate of $\{\tilde{x}:C^{T}\tilde{x}\geq b\}=\emptyset$) We
elaborate on why a certificate of infeasibility is obtained at step
(c)(i). At this step, we found an $r\leq0$ and a $J^{\prime}$ such
that $C_{J^{\prime}}\left({-r\atop 1}\right)=0$. At this iterate
$x$, we have $c_{j}^{T}x=b_{j}$ for all $j\in J^{\prime}$ and $c_{p}^{T}x<b_{p}$,
so 
\[
\left({-r\atop 1}\right)^{T}b_{J^{\prime}}>\left({-r\atop 1}\right)^{T}C_{J^{\prime}}^{T}x=0.
\]
For any $\tilde{x}$ such that $C_{J}^{T}\tilde{x}\geq b_{J}$, we
have $\left({-r\atop 1}\right)^{T}C_{J}^{T}\tilde{x}\geq\left({-r\atop 1}\right)^{T}b_{J}>0$.
But this is impossible since $C_{J^{\prime}}\left({-r\atop 1}\right)=0$
leads to a contradiction.
\end{rem}
Suppose $(x,J,u,N,Q,R,q)$ is an s-tuple. It is worth pointing out
again that the columns of $N$ form a subset of vectors which are
negative of the normals of halfspaces active at $x$. Furthermore,
$(Q,R)$ is the $QR$ factorization of $N$. The inner GI step maintains
the QR factorization of the normals active at $x$.
\begin{rem}
\label{rem:cost_GI_step}(Cost of inner GI step) A quick check on
the inner GI step shows that it involves the multiplication by $Q$,
$Q^{T}$ and $R^{-1}$ and the updating of the QR factorization of
$N$. (Since $R$ is upper triangular, the operation $r=R^{-1}Q^{T}c_{p}$
in step (a) is easy.) The most expensive operation in the inner GI
step is the updating of the QR factorization of $N$. This can be
performed with a series of at most $q$ Givens rotations when a single
column of $N$ is deleted, or by performing a Householder reflection
when a column is added to $N$. Such operations are standard in numerical
linear algebra, and are much cheaper than computing the QR factorization
of $N$ from scratch.
\end{rem}
We recall the finite convergence of using the GI algorithm to solve
\eqref{eq:SIP}.
\begin{thm}
\label{thm:fin-conv-GI}\cite{Goldfarb_Idnani} (Finite convergence
of GI algorithm) Consider Algorithm \ref{alg:GI-alg}, which solves
\eqref{eq:QP-pblm}.

(a) If $(x^{i},J^{i},u^{i},N^{i},Q^{i},R^{i},q^{i})$ and $(x^{i+1},J^{i+1},u^{i+1},N^{i+1},Q^{i+1},R^{i+1},q^{i+1})$
are successive s-tuples of Algorithm \ref{alg:GI-alg} to solve \eqref{eq:QP-pblm},
then $v(x^{i+1})>v(x^{i})$. Moreover, each $x^{i}$ is the projection
of $x_{*}$ onto the inequality constraints indexed by $J^{i}$, i.e.,
$x^{i}=P_{\{x:C_{J^{i}}^{T}x\geq b_{J^{i}}\}}(x_{*})$. 

(b) The GI algorithm converges to the optimal solution of \eqref{eq:QP-pblm}
in finitely many inner GI steps.
\end{thm}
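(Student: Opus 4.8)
The plan is to follow the classical Goldfarb--Idnani analysis, organized around the s-tuple bookkeeping introduced in Definition~\ref{def:s_tuple}. For part~(a), I would first verify that the inner GI step preserves the s-tuple property: given an input s-tuple $(x,J,u,N,Q,R,q)$ and a violated constraint $p\notin J$, each branch of Algorithm~\ref{alg:Inner-GI} either certifies infeasibility (branch (c)(i)) or returns a genuine s-tuple. The core computation is to check that $x':=x+tz$ with the updated multiplier vector $u^+$ still satisfies conditions (4) and (5) of Definition~\ref{def:s_tuple}: since $z=[I-QQ^T]c_p$ lies in the orthogonal complement of $\mspan(N)$, moving along $z$ keeps all currently active constraints $j\in J$ tight, i.e.\ $C_J^T x'=b_J$; the identity $x_*-x'=-N'u'$ (with the appropriate $N'$) follows from the update $u^+\leftarrow u^+ + t\binom{-r}{1}$ together with $r=R^{-1}Q^Tc_p$ and the QR factorization; and the step lengths $t_1,t_2$ are defined precisely so that $u'\ge 0$ (dual feasibility, via $t_1$) and the $p$th constraint does not get overshot (via $t_2$). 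Then $v(x')>v(x)$ because $v$ is strictly convex, $x$ is the projection of $x_*$ onto the larger feasible set cut out by $J$ while $x'$ is feasible for $C_{J'}^Tx'\ge b_{J'}$ with $J'$ containing $p$ which was violated at $x$, so $x\ne x'$ and the objective strictly increases along the dual-ascent path. The second sentence of~(a) --- that $x^i=P_{\{x:C_{J^i}^Tx\ge b_{J^i}\}}(x_*)$ --- is exactly condition (4a) of the s-tuple, which we have just shown is maintained inductively starting from the trivial initial s-tuple in lines 01--02.

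For part~(b), finite termination, the plan is a two-level argument. At the outer level, $v(x^i)$ is strictly increasing by part~(a), so no s-pair $(x,J)$ can repeat; since $x$ is determined by $J$ (it is the projection onto the polyhedron indexed by $J$) and there are only finitely many subsets $J\subseteq\{1,\dots,m\}$, the outer While-loop executes only finitely often. At the inner level, one must show each inner GI step itself terminates: the step loops back to (a) only after a ``partial step'' (branch (c)(iii), $t=t_1$) or a pure ``step in dual space'' (branch (c)(ii), $t_2=\infty$), and in both cases a constraint is dropped, decreasing $q$ by one, while no constraint is added until the step finally ends. Since $q$ cannot drop below $0$ and the working set strictly shrinks on each re-entry to (a), the inner loop terminates after at most $q$ re-entries, either by taking a full step ($t=t_2$) or by detecting $t=\infty$.

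The main obstacle I expect is the careful verification in branch (c)(ii) and in the degenerate cases: when $t_2=\infty$ we drop a constraint and re-enter (a) without having added $p$, and one must confirm that the intermediate object still satisfies \emph{all} of Definition~\ref{def:s_tuple} (in particular that $u^+$ stays nonnegative after the update and that the QR factorization is correctly maintained under the column deletion, per Remark~\ref{rem:cost_GI_step}), and that $z\ne 0$ is eventually achieved so that progress in primal space actually occurs. Equally delicate is confirming that when $t=t_1<\infty$ is taken infinitely often \emph{within a single inner step} it cannot happen --- this is where the monotone decrease of $q$ is essential --- and that the infeasibility certificate in branch (c)(i) is valid, which is handled by the Remark immediately following Algorithm~\ref{alg:Inner-GI}. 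Since the excerpt explicitly says we ``omit derivations of its theoretical properties,'' I would present this as a sketch citing \cite{Goldfarb_Idnani, Idnani_thesis} for the full details, emphasizing only the s-tuple invariants and the two monotonicity quantities ($v(x^i)$ strictly up at the outer level, $q$ strictly down at the inner level) that drive finiteness.
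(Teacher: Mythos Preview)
Your proposal is correct and, in fact, goes further than the paper: the paper does not prove Theorem~\ref{thm:fin-conv-GI} at all but simply states it with the citation \cite{Goldfarb_Idnani}, consistent with the figure caption's remark that the section ``omit[s] derivations of its theoretical properties.'' Your sketch faithfully reproduces the standard Goldfarb--Idnani argument (s-tuple invariants, strict increase of $v$ at the outer level, strict decrease of $q$ within an inner step), and you have already correctly anticipated that the appropriate presentation here is a brief sketch with a pointer to \cite{Goldfarb_Idnani,Idnani_thesis} for details.
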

Polynomial time algorithms for QPs include the ellipsoid method and
interior point methods. However, these methods are not as effective
in using warmstart solutions like active set methods. Though the performance
of QP algorithms are not as well tested as that for LPs (linear programs),
it is believed that active set methods behave like the simplex method:
Good in the average case, and possibly bad in the worst case. 
\begin{rem}
\label{rem:more_on_inner_GI}(More on inner GI steps) While Theorem
\ref{thm:fin-conv-GI}(b) says that the number of inner GI steps needed
to solve \eqref{eq:SIP} to optimality is finite, it may take too
many steps to solve the QP to optimality if the number of constraints
is large. Thankfully, the QPs in the SHQP strategy need not be solved
fully. Theorem \ref{thm:fin-conv-GI}(a) states that the next iterate
obtained by the inner GI step is a projection onto a polyhedron defined
by another set of inequalities that increases the distance $v(\cdot)$,
which makes the next iterate a better iterate than the last. Refer
to Figure \ref{fig:active-set}.

\begin{figure}[!h]
\includegraphics[scale=0.4]{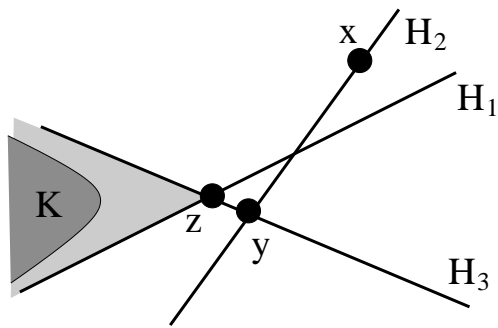}

\caption{\label{fig:active-set}Suppose at some point of an SHQP algorithm
that one arrives at the point $x$ that is on the boundary of some
halfspace $H_{2}$. The halfspace $H_{1}$ is a halfspace generated
by a previous projection, while $H_{3}$ is the newest halfspace generated.
An inner GI step from $x$ with $H_{3}$ as the violated constraint
leads to the point $y=P_{H_{2}\cap H_{3}}(x)$, and another inner
GI step from $y$ with $H_{1}$ as the violated constraint leads to
the point $z=P_{H_{1}\cap H_{3}}(x)=P_{H_{1}\cap H_{2}\cap H_{3}}(x)$.
Remark \ref{rem:more_on_inner_GI} says that even if one does not
land on the point $z=P_{H_{1}\cap H_{2}\cap H_{3}}(x)$ (i.e., the
exact solution of the QP subproblem), the point $y$ in the intermediate
calculation of the active set QP algorithm \cite{Goldfarb_Idnani}
is still a good point to continue calculations since $y$ is the projection
of $x$ onto the intersection of some of the halfspaces generated
so far. }

\end{figure}

\end{rem}

\section{Active set QP method for the BAP and SIP}

In this section, we describe our algorithms for the BAP and SIP using
the inner GI step. We can then adapt the GI algorithm for the BAP. 
\begin{algorithm}
\label{alg:BAP-GI}(Algorithm for the BAP) For a point $x_{0}$ and
closed convex sets $K_{l}$, $l=1,2,\dots,r$ in $\mathbb{R}^{n}$,
find the projection of $x_{0}$ onto $K:=\cap_{l=1}^{r}K_{l}$. 

\texttt{01} Set $J=\emptyset$, $x=x_{0}$, $q=0$, and set $u$,
$N$, $Q$ and $R$ to be empty vectors/ matrices

\texttt{02} $\quad$ so that $(x,J,u,N,Q,R,q)$ is an s-tuple.

\texttt{03} While $x\notin K$

\texttt{04} $\quad$ Find a halfspace $\{\tilde{x}:c_{p}^{T}\tilde{x}-b_{p}\leq0\}$
containing $K$ but not $x$.

\texttt{05} $\quad$ Find an s-tuple $(x^{\prime},J^{\prime},u^{\prime},N^{\prime},Q^{\prime},R^{\prime},q^{\prime})$

\texttt{06} $\quad$ $\quad$ such that $\|x^{\prime}-x_{0}\|>\|x-x_{0}\|$
or to determine $K=\emptyset$

\texttt{07} $\quad$ $\quad$ Let $(x,J,u,N,Q,R,q)\leftarrow(x^{\prime},J^{\prime},u^{\prime},N^{\prime},Q^{\prime},R^{\prime},q^{\prime})$

\texttt{08} end While.

\texttt{09} Return $x$.

\end{algorithm}
We now explain the differences of Algorithm \ref{alg:BAP-GI} from
Algorithm \ref{alg:GI-alg}.
\begin{rem}
\label{rem:line_4_BAP}(Line 4 of Algorithm \ref{alg:BAP-GI}) In
line 4 of Algorithm \ref{alg:BAP-GI}, a halfspace containing $\cap_{l=1}^{r}K_{l}$
but not $x$ is found by projecting $x$ onto a set $K_{l}$ such
that $x\notin K_{l}$. One can also revisit old constraints that were
dropped by earlier inner GI steps. Since $K$ is contained in the
intersection of the halfspaces obtained so far, a certificate of the
intersection of the halfspaces being empty also gives a certificate
of $K$ being empty in line 6.
\begin{rem}
\label{rem:line_5_BAP}(Line 5 of Algorithm \ref{alg:BAP-GI}) In
line 5, we no longer explicitly mention that one uses a single inner
GI step to find the next s-tuple. One can use more than one inner
GI step. As we have pointed out in Theorem \ref{thm:fin-conv-GI},
it takes finitely many inner GI steps to solve a QP with a set of
linear inequality constraints. As elaborated in Remark \ref{rem:more_on_inner_GI}It
may be worthwhile to solve a QP with a small number of constraints
to optimality, but once the number of constraints becomes too large,
it may be better off to find a new halfspace separating $K$ from
the current iterate $x$ to continue Algorithm \ref{alg:BAP-GI} instead.
\end{rem}
\end{rem}
We now propose our algorithm for the SIP using the inner GI step.
\begin{algorithm}
\label{alg:SIP-GI}(Algorithm for the SIP) For a point $x_{0}$ and
closed convex sets $K_{l}$, $l=1,2,\dots,r$ in $\mathbb{R}^{n}$,
find a point in $K:=\cap_{l=1}^{r}K_{l}$. The algorithm here is almost
exactly the same as that of Algorithm \ref{alg:BAP-GI}, except that
we change line 7 to:

\texttt{$\,$}

\texttt{05} $\quad$ Find an s-tuple $(x^{\prime},J^{\prime},u^{\prime},N^{\prime},Q^{\prime},R^{\prime},q^{\prime})$

\texttt{06} $\quad$ $\quad$ such that $\|x^{\prime}-x\|>0$ or to
determine $K=\emptyset$

\texttt{07} $\quad$ $\quad$ Let $(x,J,u,N,Q,R,q)\leftarrow(x^{\prime},J^{\prime},0,N^{\prime},Q^{\prime},R^{\prime},q^{\prime})$ 

$\,$
\end{algorithm}
Remarks \ref{rem:line_4_BAP} and \ref{rem:line_5_BAP} still apply
to Algorithm \ref{alg:SIP-GI}. In view of Definition \ref{def:s_tuple}(5)(b),
the point that we project from in Algorithm \ref{alg:SIP-GI} at the
end of line 7 is now changed to the current value of $x$ since $u$
is set to be the zero vector. This is reflected in Figure \ref{fig:sliding}
for two cases: When the inner GI step does not have to drop constraints,
and when the inner GI step has to drop a constraint. Since the point
we now project from is $x$, line 6 is changed accordingly.

Next, we explain why we need to use a different algorithm in line
5 of Algorithm \ref{alg:SIP-GI}. The inner GI step as written in
Algorithm \ref{alg:Inner-GI} requires $u>0$ in order to calculate
the ratio $t_{1}$ in \eqref{eq:inner_GI_stuck} properly, but in
Algorithm \ref{alg:SIP-GI}, the multipliers $u$ are often zero due
to line 7 of Algorithm \ref{alg:SIP-GI}. We write down an easy result
that leads to our design of the degenerate inner GI step (Algorithm
\ref{alg:degen_inner_GI}) to handle the case when $u=0$ at the start
of the inner GI step.
\begin{prop}
\label{prop:degen_QP}(A particular QP) Consider the QP 
\begin{eqnarray}
 & \min_{x\in\mathbb{R}^{n}} & \frac{1}{2}\|x-x_{*}\|^{2}\nonumber \\
 & \mbox{s.t. } & \tilde{C}^{T}x\geq\tilde{b}\label{eq:degen_QP_1}\\
 &  & \tilde{c}_{p}^{T}x\geq\hat{b},\nonumber 
\end{eqnarray}
where $\tilde{C}\in\mathbb{R}^{n\times q}$, $\tilde{b}\in\mathbb{R}^{q}$,
$\tilde{c}_{p}\in\mathbb{R}^{n}$ and $\hat{b}\in\mathbb{R}$ are
such that $\tilde{C}^{T}x_{*}=\tilde{b}$ and $\tilde{c}_{p}^{T}x_{*}<\hat{b}$.
Let $J$ be a subset of $\{1,\dots,q\}$, and let $N=\tilde{C}_{J}$
(i.e., the columns of $N$ are the columns of $\tilde{C}$ indexed
by $J$), with QR factorization $N=QR$. Let $z=(I-QQ^{T})\tilde{c}_{p}$,
$r=R^{-1}Q^{T}\tilde{c}_{p}$ and $t_{2}=\frac{\hat{b}-\tilde{c}_{p}^{T}x_{*}}{z^{T}\tilde{c}_{p}}$.
If $r\leq0$, then $x_{*}+t_{2}z$ is the optimal solution of the
problem 
\begin{eqnarray}
 & \min_{x\in\mathbb{R}^{n}} & \frac{1}{2}\|x-x_{*}\|^{2}\nonumber \\
 & \mbox{s.t. } & \tilde{C}_{J}^{T}x\geq\tilde{b}_{J}\label{eq:degen_QP_2}\\
 &  & \tilde{c}_{p}^{T}x\geq\hat{b},\nonumber 
\end{eqnarray}
and all constraints listed in \eqref{eq:degen_QP_2} are tight.\end{prop}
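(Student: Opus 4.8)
The plan is a direct verification of the KKT conditions. First I would record the two algebraic identities behind the formulas. Since $N=QR$ is an economy QR factorization of a full-column-rank matrix, $Nr = QRR^{-1}Q^T\tilde c_p = QQ^T\tilde c_p$, so $\tilde c_p = Nr + z$ with $z=(I-QQ^T)\tilde c_p$ orthogonal to the column space of $N$; hence $N^Tz=0$ and $z^T\tilde c_p = z^T(Nr+z) = \|z\|^2$. Because constraint $p$ is violated at $x_*$ we have $\hat b - \tilde c_p^Tx_* > 0$, and (with $z\neq 0$, which is implicit in $t_2$ being defined) this gives $t_2 = (\hat b - \tilde c_p^Tx_*)/\|z\|^2 > 0$.

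Next I would show that $\bar x := x_* + t_2 z$ is feasible for \eqref{eq:degen_QP_2} with every listed constraint active. For $j\in J$ the column $\tilde c_j$ of $N$ satisfies $\tilde c_j^Tz=0$, so $\tilde c_j^T\bar x = \tilde c_j^Tx_* = \tilde b_j$ using the standing hypothesis $\tilde C^Tx_* = \tilde b$; and $\tilde c_p^T\bar x = \tilde c_p^Tx_* + t_2 z^T\tilde c_p = \tilde c_p^Tx_* + t_2\|z\|^2 = \hat b$ by the definition of $t_2$. This establishes the ``all constraints tight'' claim and, in particular, feasibility of $\bar x$.

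Then I would check the KKT conditions of the convex quadratic program \eqref{eq:degen_QP_2} at $\bar x$. The objective gradient there is $\bar x - x_* = t_2 z = t_2\tilde c_p - t_2 Nr$; taking $\mu := -t_2 r$ as the multipliers of the constraints indexed by $J$ and $\nu := t_2$ for the $p$th constraint, the stationarity relation $\bar x - x_* = N\mu + \nu\tilde c_p$ holds by construction. Dual feasibility holds since $\nu = t_2 > 0$ and $\mu = -t_2 r \geq 0$, the latter being precisely where the hypothesis $r\leq 0$ enters; complementary slackness is automatic because all constraints are active. Since \eqref{eq:degen_QP_2} minimizes a strictly convex function over a nonempty polyhedron, satisfaction of the KKT conditions certifies that $\bar x$ is the unique global minimizer, which is the assertion.

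I do not expect a genuine obstacle: the argument is a routine KKT verification. The only points that need a word of care are the implicit standing assumptions --- $N$ has full column rank (so $R^{-1}$ makes sense) and $z\neq 0$ (so $t_2$ is defined), both automatic in the GI-algorithm context where this proposition is used --- together with the sign bookkeeping in $\mu = -t_2 r$, where $t_2 > 0$ combines with $r\leq 0$ to yield the required dual feasibility.
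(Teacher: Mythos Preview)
Your proposal is correct and follows essentially the same route as the paper: verify that all listed constraints are active at $x_*+t_2z$ (using $N^Tz=0$ and the definition of $t_2$), then exhibit the KKT multipliers via $t_2z = t_2\tilde c_p - t_2Nr$. If anything, you are slightly more explicit than the paper about the sign bookkeeping (identifying $\mu=-t_2r\ge 0$ and $\nu=t_2>0$) and about the implicit assumption $z\neq 0$ needed for $t_2$ to be defined.
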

\begin{proof}
Before we continue with the proof, we remark that a result similar
to this is the foundation of why the inner GI step fits properly in
the GI algorithm. 

Let $x_{+}=x_{*}+t_{2}z$. Then 
\[
\tilde{C}_{J}^{T}x_{+}=\tilde{C}_{J}^{T}x_{*}+t_{2}\tilde{C}_{J}^{T}z=\tilde{b}_{J}+t_{2}N^{T}(I-QQ^{T})\tilde{c}_{p}.
\]
But $N^{T}(I-QQ^{T})=R^{T}Q^{T}(I-QQ^{T})=0$, so $\tilde{C}_{J}^{T}x_{+}=\tilde{b}_{J}$
as needed. Also, 
\[
\tilde{c}_{p}^{T}x_{+}=\tilde{c}_{p}^{T}x_{*}+\frac{\hat{b}-\tilde{c}_{p}^{T}x_{*}}{z^{T}n^{+}}\tilde{c}_{p}^{T}z=\hat{b}.
\]

Thus the constraints listed in \eqref{eq:degen_QP_2} are tight. Lastly,
\begin{eqnarray*}
x_{*}-x_{+} & = & t_{2}z\\
 & = & t_{2}(I-QQ^{T})\tilde{c}_{p}\\
 & = & t_{2}[\tilde{c}_{p}-QQ^{T}\tilde{c}_{p}]\\
 & = & t_{2}[\tilde{c}_{p}-QRr]\\
 & = & t_{2}[\tilde{c}_{p}-Nr]\\
 & = & t_{2}[\begin{array}{cc}
N & \tilde{c}_{p}\end{array}]\left(\begin{array}{c}
-r\\
1
\end{array}\right).
\end{eqnarray*}
All these show that the KKT conditions hold at $x_{+}$, and thus
$x_{+}$ is optimal.
\end{proof}
Such a $J$ found in Proposition \ref{prop:degen_QP} need not be
the active set that solves \eqref{eq:degen_QP_1}. To find this active
set, one would have to start from the solution in \eqref{eq:degen_QP_2}
and invoke inner GI steps using the constraints violated in $\tilde{C}^{T}x\geq\tilde{b}$
(that are certainly not in the index set $J$). Theorem \ref{thm:fin-conv-GI}
and Remark \ref{rem:more_on_inner_GI} still apply. 

We now write down the degenerate inner GI step in Algorithm \ref{alg:degen_inner_GI}
below, which plays the role of the inner GI step (Algorithm \ref{alg:Inner-GI})
when $u=0$. Algorithm \ref{alg:degen_inner_GI} is written so that
the similarities to the inner GI step are preserved.
\begin{algorithm}
\label{alg:degen_inner_GI}(Degenerate inner GI step) In step 5 of
Algorithm \ref{alg:SIP-GI}, one has to solve a QP of the form \eqref{eq:degen_QP_1}.
The inner GI step (Algorithm \ref{alg:Inner-GI}) cannot handle the
case when $u=0$, so we present this alternative algorithm. This algorithm
has the following inputs and outputs. We also list its full pseudocode.

\textbf{Inputs: }An s-tuple $(x,J,u,N,Q,R,q)$ and $p\notin J$ s.t.
$c_{p}^{T}x-b_{p}<0$ and $u=0$.

\textbf{Outputs:} Either

(1) An s-tuple $(x^{\prime},J^{\prime},u^{\prime},N^{\prime},Q^{\prime},R^{\prime},q^{\prime})$
s.t. $J^{\prime}\subset J\cup\{p\}$ and $v(x^{\prime})>v(x)$, or

(2) A certificate of $\{\tilde{x}:C^{T}\tilde{x}\geq b\}$ being empty.

$\,$

\textbf{(a) Determine step direction}

Loop

$\quad$Compute $z=[I-QQ^{T}]c_{p}$

$\quad$If $q>0$, compute $r=R^{-1}Q^{T}c_{p}$,

$\quad\quad$ otherwise $r=-1$.

$\quad$If $r\leq0$, break.

$\quad$Find index $l$ such that $r_{l}>0$.

$\quad$Update $J\leftarrow J\backslash\{J(l)\}$, $q\leftarrow q-1$.

$\quad$Remove $l$th column from $N$ and update $(Q,R)$.

end Loop

\textbf{(b) Compute full step length $t_{2}$ (minimum step in primal
space such that the $p$th constraint becomes feasible.) }

If $|z|=0$, then problem is infeasible. Set $J^{\prime}\leftarrow J\cup\{p\}$.
We have $C_{J^{\prime}}\left({-r\atop 1}\right)=0$ and $\left({-r\atop 1}\right)\geq0$.
End.

Otherwise, set $t_{2}\leftarrow\frac{b_{p}-c_{p}^{T}x}{z^{T}c_{p}}$

\textbf{(c) Determine new s-pair and take step in primal and dual
space}

Set $x^{\prime}\leftarrow x+t_{2}z$,

$u^{\prime}\leftarrow t_{2}\left({-r\atop 1}\right)$ .

Add constraint $p$; i.e. set $J^{\prime}\leftarrow J\cup\{p\}$,
$q^{\prime}\leftarrow q+1$, update $Q$, $R$ and $N$ to get $Q^{\prime}$,
$R^{\prime}$ and $N^{\prime}$, and degenerate inner GI step ends.

\end{algorithm}
We give a more detailed discussion of Algorithm \ref{alg:degen_inner_GI}
in Section \ref{sec:more_degen_inner_GI}.

We present an elementary result that shows the dimension reduction
property of the QR factorization that is applicable to both Algorithms
\ref{alg:BAP-GI} and \ref{alg:SIP-GI}.
\begin{prop}
\label{prop:reduc-1}(Problem size reduction) Let $x\in\mathbb{R}^{n}$,
$C\in\mathbb{R}^{n\times d}$ and $b\in\mathbb{R}^{d}$. Let $QR=C$
be the economy QR factorization of $C$. Suppose $z=P_{\{\tilde{x}^{\prime}:R^{T}\tilde{x}^{\prime}\geq b\}}(Q^{T}x)$.
Then $P_{\{\tilde{x}:C^{T}\tilde{x}\geq b\}}(x)=Qz+(I-QQ^{T})x$.\end{prop}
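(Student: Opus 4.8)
The plan is to verify directly that the point $y := Qz + (I-QQ^T)x$ satisfies the characterization of the projection of $x$ onto the polyhedron $P := \{\tilde x : C^T\tilde x \geq b\}$, namely that $y \in P$ and that $x - y$ lies in the normal cone to $P$ at $y$. The key structural observation is the orthogonal splitting $\mathbb{R}^n = \mspan(Q) \oplus \mspan(Q)^\perp$, where $\mspan(Q)$ is the column space of $C$: since $C = QR$ with $R$ invertible (economy QR, $R \in \mathbb{R}^{d\times d}$), we have $C^T\tilde x = R^T Q^T \tilde x$, so the constraint $C^T\tilde x \geq b$ only ``sees'' the component $Q^T\tilde x$ of $\tilde x$ in the column space. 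This is what makes the $n$-dimensional projection reduce to the $d$-dimensional projection defining $z$.

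First I would record that $Q^T y = Q^T Q z + Q^T(I-QQ^T)x = z$, using $Q^TQ = I_d$ and $Q^T(I-QQ^T) = Q^T - Q^T = 0$. Hence $C^T y = R^T Q^T y = R^T z$, and since $z = P_{\{\tilde x': R^T\tilde x' \geq b\}}(Q^Tx)$ we have $R^T z \geq b$, so $y \in P$: feasibility is done. Next I would compute $x - y = x - Qz - (I-QQ^T)x = QQ^Tx - Qz = Q(Q^Tx - z)$. By the optimality (KKT) conditions for the $d$-dimensional projection, $Q^Tx - z = -R\lambda$ for some multiplier vector $\lambda \geq 0$ supported on the constraints tight at $z$, i.e. on indices $j$ with $(R^Tz)_j = b_j$. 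Therefore $x - y = -QR\lambda = -C\lambda$, which is exactly $-\sum_j \lambda_j c_j$ with $\lambda \geq 0$ and $\lambda_j = 0$ whenever $c_j^T y = (R^Tz)_j > b_j$; complementary slackness transfers verbatim because $C^Ty = R^Tz$. This says $x - y$ belongs to the normal cone of $P$ at $y$, so $y = P_P(x)$ by the standard variational characterization of projection onto a convex set.

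I do not anticipate a serious obstacle here; the argument is essentially bookkeeping with the identities $Q^TQ = I$ and $C = QR$. The one point to state carefully is that the KKT multipliers of the reduced problem can be taken nonnegative and complementary-slack — this is just the optimality condition for projecting onto a polyhedron (equivalently, $Q^Tx - z \in -\cone$ of the active rows of $R^T$), so it may be cleanest to invoke the projection characterization directly rather than spell out multipliers, depending on how much machinery the paper wants to assume. A secondary subtlety worth a remark is that $\mspan(Q)$ may be a proper subspace of $\mathbb{R}^n$ (when $d < n$), but this causes no trouble: the component $(I-QQ^T)x$ of $x$ orthogonal to the column space is simply inherited unchanged by $y$, which is consistent with the fact that $P$ is invariant under translation along $\mspan(Q)^\perp$.
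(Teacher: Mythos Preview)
Your argument is correct, and the structural idea (split $\mathbb{R}^n$ into $\mspan(Q)\oplus\mspan(Q)^\perp$, observe the constraints depend only on the first component, and transfer optimality from the $d$-dimensional projection) is the same as the paper's. The difference is the characterization of optimality used. The paper does exactly what you flagged as the cleaner alternative: it verifies the variational inequality directly rather than going through KKT multipliers. Writing $p:=Qz+(I-QQ^{T})x$ and taking an arbitrary feasible $y$, the paper decomposes $y-p=(I-QQ^{T})(y-x)+Q(Q^{T}y-z)$ and $x-p=Q(Q^{T}x-z)$; the cross term vanishes because $(I-QQ^{T})Q=0$, and $(Q^{T}y-z)^{T}(Q^{T}x-z)\leq 0$ by the obtuse-angle characterization of $z$ in the reduced problem, giving $(y-p)^{T}(x-p)\leq 0$. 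This sidesteps any appeal to multipliers entirely. Your KKT route is equally valid and has the advantage of making the transfer of the active set and multipliers explicit, which meshes well with the surrounding active-set QP discussion; the paper's route is slightly more self-contained. (Incidentally, you verify feasibility of $p$ explicitly via $C^{T}p=R^{T}z\geq b$, which the paper leaves implicit.)
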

\begin{proof}
Let $y$ be any point be $\{\tilde{x}:C^{T}\tilde{x}\geq b\}$. Then
$C^{T}y\geq b$ implies that $R^{T}(Q^{T}y)\geq b$. So 
\begin{eqnarray*}
y-[Qz+(I-QQ^{T})x] & = & (I-QQ^{T})(y-x)+QQ^{T}y-Qz.\\
\mbox{and }x-[Qz+(I-QQ^{T})x] & = & QQ^{T}x-Qz.
\end{eqnarray*}
From $Q^{T}Q=I$, we have

\begin{equation}
[QQ^{T}y-Qz]^{T}[QQ^{T}x-Qz]=[Q^{T}y-z]^{T}[Q^{T}x-z].\label{eq:z-proj}
\end{equation}
Since $z=P_{\{\tilde{x}^{\prime}:R^{T}\tilde{x}^{\prime}\geq b\}}(Q^{T}x)$,
the term in \eqref{eq:z-proj} is less than equal to zero. Furthermore,
from $Q^{T}Q=I$, we have $(I-QQ^{T})Q=0$, so 
\begin{eqnarray*}
[(I-QQ^{T})(y-x)]^{T}[QQ^{T}x-Qz] & = & (y-x)^{T}(I-QQ^{T})Q[Q^{T}x-z]\\
 & = & 0.
\end{eqnarray*}
Combining the displayed formulas so far gives 
\[
\big[y-[Qz+(I-QQ^{T})x]\big]^{T}\big[x-[Qz+(I-QQ^{T})x]\big]\leq0.
\]
Since this holds for all $y\in\{\tilde{x}:C^{T}\tilde{x}\geq b\}$,
our conclusion follows.
\end{proof}
Proposition \ref{prop:reduc-1} shows that the QR factorization of
the constraints can be used to simplify the QP associated with projecting
a point $x$ onto the set of constraints generated so far. This is
a better strategy than following the descriptions of Algorithms \ref{alg:BAP-GI}
and \ref{alg:SIP-GI} to the letter. It appears that the kind of problems
where projection methods are the most effective (for example, those
in \cite{HermanChen08,CensorChenCombettesDavidiHerman12}) involve
a huge number of variables, so this dimensional reduction allows us
to find a better next iterate by solving a much smaller QP. 

We remark on the dropping and aggregation of constraints.
\begin{rem}
\label{rem:agg_BAP}(Aggregation of constraints for the BAP) In Algorithm
\ref{alg:BAP-GI} (BAP), one stores the s-tuple $(x,J,u,N,Q,R,q)$.
It may be necessary to store a smaller matrix $N$ (and correspondingly,
for all other variables except for $x$ and $q$) to reduce storage
requirements. We highlight Definition \ref{def:s_tuple}(5)(b) as
the condition requiring more attention. In the case where $N\in\mathbb{R}^{n\times2}$
and $u\in\mathbb{R}^{2}$, we can easily see how to perform the aggregation.
This same method of aggregation can be used to aggregate any two chosen
columns of $N$ when $N$ is of a much larger size. When $N$ is maintained
to be of size $n\times1$, Algorithm \ref{alg:BAP-GI} reduces to
Algorithm \ref{alg:Haugazeau} later. Just like in Remark \ref{rem:cost_GI_step},
we can make use of Givens rotations to update the QR factorizations
of the normals when columns are removed.
\begin{rem}
\label{rem:agg_SIP}(Dropping of constraints for the SIP) The aggregation
strategy in Remark \ref{rem:agg_BAP} can be done for Algorithm \ref{alg:SIP-GI}
as well. The s-tuple $(x,J,0,N,Q,R,q)$ has zeros for its $u$ component,
so the aggregation strategy can be much more flexible. Another strategy
to reduce storage requirements is to drop constraints. One can easily
see that when no normals of active constraints are stored, Algorithm
\ref{alg:SIP-GI} reduces to the method of alternating projections.

\end{rem}
\end{rem}
We recall another algorithm for the BAP. This method was tried in
\cite{Combettes_blockiter}, but it goes back to \cite{Haugazeau68}
as far as we are aware. See \cite{BauschkeCombettes11}.
\begin{algorithm}
\label{alg:Haugazeau}(An algorithm for the BAP) This algorithm solves
the BAP \eqref{eq:Proj-pblm}, i.e., the problem of projecting a point
$x^{0}$ onto the intersection $K=\cap_{l=1}^{r}K_{l}$ of convex
sets $K_{l}$. 

Set $i=0$

Loop

$\quad$Project $x^{i}$ onto a set $K_{l}$ and obtain a halfspace
$H_{i}$.

$\quad$Project $x^{0}$ onto the intersection of $H_{i}$ and the
halfspace 

$\quad\quad$with normal $x^{0}-x^{i}$ and $x^{i}$ onto its boundary
to get $x^{i+1}$.

$\quad$Set $i\leftarrow i+1$ 

until $x^{i}$ sufficiently close to $P_{K}(x^{0})$.
\end{algorithm}

\section{\label{sec:simple-exa}A simple example}

In this section, we make use of a simple example to show that Algorithms
\ref{alg:BAP-GI} (BAP) and \ref{alg:SIP-GI} (SIP) reduce to a Newton-like
method and achieve fast convergence. Methods that do not involve quadratic
programming (the method of alternating projections, Dykstra's algorithm)
or involve QPs that are too small (Algorithm \ref{alg:Haugazeau})
cannot reduce to a Newton-like method and achieve such fast convergence. 

Consider two circles with centers $(\pm2.9,0)$, radii $3$, and the
starting iterate of $(0,10)$. We apply Algorithms \ref{alg:BAP-GI}
(BAP) and \ref{alg:SIP-GI} (SIP). For both cases, the halfspaces
in step 4 are found by projecting onto the sets in a cyclic manner,
and we do not revisit old halfspaces. We only perform a single inner
GI step or degenerate inner GI step in step 5. It can be seen that
both sets of iterates converge to $\bar{x}=(0,\sqrt{0.59})$. Table
\ref{tab:circ-eg} shows the iterates of Algorithms \ref{alg:BAP-GI}
(BAP) and \ref{alg:SIP-GI} (SIP), with $x^{i}$ being the value of
$x$ in the $i$th iteration. The theory implies that multiple-term
superlinear convergence occurs, and this is observed in our experiment. 

We now compare our algorithms to the method of alternating projections.
With the same starting conditions, the method of alternating projections
takes 200 iterations to find a point distance $2.35\times10^{-13}$
from $\bar{x}$. The theory of alternating projections suggests that
the rate of linear convergence once close to $\bar{x}$ is 
\[
\cos\left(2\cos^{-1}\left(\frac{2.9}{3}\right)\right)=2\left(\frac{2.9}{3}\right)^{2}-1\approx0.8689.
\]
The corresponding values for measure 2 defined in Table \ref{tab:circ-eg}
would eventually be $\ln(0.8689)\approx-0.1405$, which was observed
in our numerical experiments. 

We now look at algorithms for the BAP. Algorithm \ref{alg:Haugazeau}
takes 90,000 iterations to find a point distance $7.51\times10^{-4}$
from $\bar{x}$. Dykstra's algorithm takes 2000 iterations to find
a point distance $3.97\times10^{-10}$ from $\bar{x}$, but we caution
that Dykstra's algorithm works differently in that the KKT conditions
are only approximately satisfied at each iterate.

\begin{table}[h]
\begin{tabular}{|l|l|l|l|l|l|l|}
\hline 
Iter & \multicolumn{2}{l|}{(a) $d(x^{i},\bar{x})$} & \multicolumn{2}{l|}{(b) Measure 1} & \multicolumn{2}{l|}{(c) Measure 2}\tabularnewline
\hline 
 & BAP & SIP & BAP & SIP & BAP & SIP\tabularnewline
\hline 
\hline 
0 & $9.23\times10^{0}$ & $9.23\times10^{0}$ &  &  &  & \tabularnewline
\hline 
1 & $2.95\times10^{0}$ & $2.95\times10^{0}$ & $-1.14$ & $-1.14$ & $-1.14$ & $-1.14$\tabularnewline
\hline 
2 & $1.48\times10^{0}$ & $7.98\times10^{-1}$ & $-0.917$ & $-1.22$ & $-0.694$ & $-1.31$\tabularnewline
\hline 
3 & $2.16\times10^{-1}$ & $1.70\times10^{-1}$ & $-1.25$ & $-1.33$ & $-1.92$ & $-1.55$\tabularnewline
\hline 
4 & $1.54\times10^{-1}$ & $7.57\times10^{-2}$ & $-1.02$ & $-1.20$ & $-0.338$ & $-0.809$\tabularnewline
\hline 
5 & $1.60\times10^{-2}$ & $8.04\times10^{-3}$ & $-1.27$ & $-1.41$ & $-2.26$ & $-2.24$\tabularnewline
\hline 
6 & $5.22\times10^{-3}$ & $1.38\times10^{-3}$ & $-1.25$ & $-1.47$ & $-1.12$ & $-1.76$\tabularnewline
\hline 
7 & $7.91\times10^{-5}$ & $1.79\times10^{-5}$ & $-1.67$ & $-1.88$ & $-4.19$ & $-4.34$\tabularnewline
\hline 
8 & $6.91\times10^{-6}$ & $4.84\times10^{-7}$ & $-1.76$ & $-2.10$ & $-2.44$ & $-3.61$\tabularnewline
\hline 
9 & $1.67\times10^{-9}$ & $8.28\times10^{-11}$ & $-2.49$ & $-2.83$ & $-8.33$ & $-8.67$\tabularnewline
\hline 
10 & $1.21\times10^{-11}$ & $5.93\times10^{-14}$ & $-2.74$ & $-3.27$ & $-4.93$ & $-7.24$\tabularnewline
\hline 
11 & $9.44\times10^{-16}$ & $7.86\times10^{-16}$ & $-3.35$ & $-3.36$ & $-9.46$ & $-4.32$\tabularnewline
\hline 
\end{tabular}

\caption{\label{tab:circ-eg}This table plots the values for the experiments
in Section \ref{sec:simple-exa}. To check the speed of convergence,
we calculate $\frac{[\ln(d(x^{i},\bar{x}))-\ln(d(x^{0},\bar{x}))]}{i}$
for Measure 1 and $\ln(d(x^{i},\bar{x}))-\ln(d(x^{i-1},\bar{x}))$
for Measure 2. In the case of linear convergence (of a geometric sequence),
these numbers will be constant. The measures decrease over iterations,
which is evidence for multiple-term superlinear convergence. }
\end{table}

\begin{rem}
(Insufficient halfspaces stored to get superlinear convergence) It
may appear from our example that collecting one halfspace from each
set in the SIP \eqref{eq:SIP} is sufficient to give multiple-term
superlinear convergence. But this is not true in general. An example
involving the intersection of a plane and a line in $\mathbb{R}^{3}$
where only linear and not superlinear convergence is obtained is given
in \cite[Example 4.7]{cut_Pang12}.
\end{rem}
An advantage of Algorithms \ref{alg:BAP-GI} and \ref{alg:SIP-GI}
is that they store and make use of previously generated halfspaces
as far as possible to accelerate convergence. Algorithm \ref{alg:Haugazeau}
and Dykstra's algorithm do not keep such information. One can perform
a single inner GI step in line 5 of Algorithm \ref{alg:BAP-GI} for
a low cost method of finding an iterate $x$. Alternatively, one can
revisit old constraints and perform as many inner GI steps as necessary
to solve the intermediate subproblems to optimality so that fewer
projections are carried out. Further discussions of Algorithm \ref{alg:SIP-GI}
are given in Section \ref{sec:more_degen_inner_GI}. 

Even if multiple-term superlinear convergence is not achieved, we
believe our algorithms would be better for most, if not all, problems.
When the problems are well structured, the number of halfspaces that
need to be stored to achieve multiple-term superlinear convergence
can be quite small.

\section{\label{sec:one-box}One box constraint}

The problem of interest in this section is 
\begin{eqnarray}
 & \min_{x\in\mathbb{R}^{n}} & \frac{1}{2}\|x-x_{*}\|^{2}\nonumber \\
 & \mbox{s.t. } & L_{i}\leq x_{i}\leq U_{i}\label{eq:box-degen-QP}\\
 &  & c_{p}^{T}x\geq\hat{b},\nonumber 
\end{eqnarray}
where $c_{p}\in\mathbb{R}^{n}$, $\hat{b}\in\mathbb{R}$, and $x_{*}$
satisfies $L_{i}\leq(x_{*})_{i}\leq U_{i}$ for all indices $i$,
but $c_{p}^{T}x<\hat{b}$. The constraints $L_{i}\leq x_{i}\leq U_{i}$
are also known as the \emph{box constraints}. Problems with box constraints
received particular attention in \cite{CensorChenCombettesDavidiHerman12}.
The $L_{i}$ and $U_{i}$ can be $\pm\infty$ rather than real numbers.
Such a problem arises when one of the sets involved in an SIP is a
box constraint, and the constraint $c_{p}^{T}x\geq\hat{b}$ is generated
by the projection of $x_{*}$ onto some set $K_{l}$ such that $x_{*}\notin K_{l}$.
The problem \eqref{eq:box-degen-QP} is a particular case of \eqref{eq:degen_QP_1}.
For \eqref{eq:degen_QP_1}, we can only find an active set $J$ that
solves \eqref{eq:degen_QP_1} partially in \eqref{eq:degen_QP_2}
through a degenerate inner GI step. But since the constraints corresponding
to the box constraints are orthogonal, we shall see in the rest of
this section that problem \eqref{eq:box-degen-QP} can be solved to
optimality quite effectively. This allows us to make maximum use of
the constraint $c_{p}^{T}x\geq\hat{b}$, especially when this constraint
is expensive to calculate.

\begin{figure}
\includegraphics[scale=0.5]{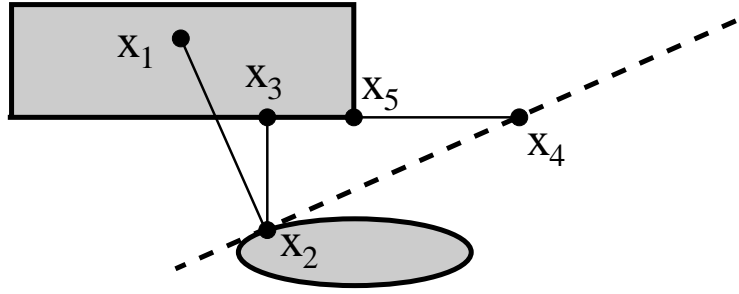}

\caption{This figure shows how one can make use of the orthogonal normals in
a box constraint to accelerate the SHQP procedure. The rectangle represents
a box constraint, while the ellipse represents a set that is not easy
to project onto. The first projection from $x_{1}$ to $x_{2}$ generates
a halfspace that must contain the intersection. Subsequently, one
tries to find a point in the box and the halfspace. The first three
iterations from $x_{1}$ to $x_{3}$ follow the standard alternating
projections. A tight contraint in the box is identified at $x_{3}$,
and one makes use of the procedure in Section \ref{sec:one-box} (motivated
by the idea in Figure \ref{fig:sliding}) to show that there is an
empty intersection.}
\end{figure}

To simplify notation, we let $B$ be the box 
\begin{equation}
B=\{x:L_{i}\leq x_{i}\leq U_{i}\mbox{ for all }i\}.\label{eq:def-box}
\end{equation}
 We now present our algorithm and result for \eqref{eq:box-degen-QP}.
\begin{algorithm}
\label{alg:box}(Algorithm for box constraints) This algorithm solves
\eqref{eq:box-degen-QP}. Let $B$ be as defined in \eqref{eq:def-box}.

\texttt{01} Set $j=0$ and $\tilde{x}^{0}=x_{*}$.

\texttt{02} While $c_{p}^{T}\tilde{x}^{j}<\hat{b}$

\texttt{03} $\quad$\textbf{(Determine step direction)} Let the direction
$d\in\mathbb{R}^{n}$ be defined to be 
\[
d_{i}=\begin{cases}
0 & \mbox{ if }\big[(c_{p})_{i}>0\mbox{ and }\tilde{x}_{i}^{j}=U_{i}\big]\mbox{ or }\big[(c_{p})_{i}<0\mbox{ and }\tilde{x}_{i}^{j}=L_{i}\big]\\
(c_{p})_{i} & \mbox{otherwise}.
\end{cases}
\]

\texttt{04} $\quad$\textbf{(Determine step length)} 

\texttt{05} $\quad$If $\|d\|=0$, then problem is infeasible. Stop.

\texttt{06} $\quad$Otherwise, let $t_{2}=\frac{\hat{b}-c_{p}^{T}\tilde{x}^{j}}{c_{p}^{T}d}$,
and let $y^{j}\leftarrow\tilde{x}^{j}+t_{2}d$. 

\texttt{07} $\quad$Let $\tilde{x}^{j+1}\leftarrow P_{B}(y^{j})$
and $j\leftarrow j+1$. 

\texttt{08} end while
\end{algorithm}
We now show that the iterates $\{\tilde{x}^{j}\}$ of Algorithm \ref{alg:box}
terminates to the solution of \eqref{eq:box-degen-QP}.
\begin{prop}
(Convergence of Algorithm \ref{alg:box}) Algorithm \ref{alg:box}
either converges to the solution of \eqref{eq:box-degen-QP} or declares
the infeasibility of its constraints in finitely many steps.\end{prop}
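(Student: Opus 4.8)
The plan is to show three things: (i) each iterate $\tilde{x}^{j}$ stays feasible for the box constraints, i.e.\ $\tilde{x}^{j}\in B$; (ii) the algorithm makes genuine progress so it cannot cycle, which forces finite termination; and (iii) when it terminates normally (the while-loop condition fails, so $c_{p}^{T}\tilde{x}^{j}\geq\hat{b}$), the point $\tilde{x}^{j}$ is actually the optimal solution of \eqref{eq:box-degen-QP}, and when it terminates with $\|d\|=0$ the constraints are genuinely inconsistent. Claim (i) is immediate since line 07 sets $\tilde{x}^{j+1}=P_{B}(y^{j})\in B$ and $\tilde{x}^{0}=x_{*}\in B$ by hypothesis.

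For finite termination, the key observation is a monotonicity/geometry argument analogous to the one underlying the inner GI step. I would track the quantity $\|\tilde{x}^{j}-x_{*}\|$ and show it is nondecreasing, with the set of ``active'' box faces (the coordinates $i$ with $d_{i}=0$) behaving monotonically in a suitable sense. Concretely: the direction $d$ in line 03 is the projection of $c_{p}$ onto the tangent cone of $B$ at $\tilde{x}^{j}$; moving along $d$ by $t_{2}$ lands at $y^{j}$ with $c_{p}^{T}y^{j}=\hat{b}$, and $P_{B}(y^{j})$ only re-activates coordinates that were previously free. Because the normals of the box are the coordinate vectors $\pm e_{i}$, they are mutually orthogonal, so once a coordinate hits its bound in the ``right'' direction it never needs to leave — this is exactly the feature that distinguishes \eqref{eq:box-degen-QP} from the general \eqref{eq:degen_QP_1}. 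Hence the active index set is monotone nondecreasing; since there are only $n$ coordinates, after at most $n$ iterations either $d$ becomes zero (infeasibility, handled by line 05) or the constraint $c_{p}^{T}x\geq\hat{b}$ is satisfied and the loop exits.

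For optimality at normal termination, I would verify the KKT conditions of \eqref{eq:box-degen-QP} at the final $\tilde{x}$, in the spirit of the proof of Proposition~\ref{prop:degen_QP}: the residual $x_{*}-\tilde{x}$ decomposes into a nonnegative combination of the active box normals plus a nonnegative multiple of $c_{p}$ (the multiple of $c_{p}$ is $\sum_{j}t_{2}^{(j)}$ accumulated over the iterations, which is $\geq0$, and it is positive iff the $c_{p}$ constraint is tight, which it is at exit since the last step made $c_{p}^{T}y=\hat{b}$ and the subsequent projection onto $B$ does not decrease $c_{p}^{T}x$ — this needs a small check using that $P_{B}$ moves each coordinate toward its bound, together with $d_{i}=0$ exactly on the offending faces). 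Complementary slackness on the box faces follows because the box normal $e_{i}$ (resp.\ $-e_{i}$) only gets a nonzero multiplier when coordinate $i$ is pinned at $U_{i}$ (resp.\ $L_{i}$). For the infeasibility case, $\|d\|=0$ means $(c_{p})_{i}\neq0$ forces $\tilde{x}_{i}^{j}$ to be pinned at the bound in the direction that cannot increase $c_{p}^{T}x$; then for every $x\in B$ one gets $c_{p}^{T}x\leq c_{p}^{T}\tilde{x}^{j}<\hat{b}$, exhibiting inconsistency.

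The main obstacle I anticipate is the monotonicity claim that the active-face set never loses an element — i.e.\ ruling out ``zig-zagging'' where a coordinate is pinned at a bound in one iteration and must be released later. This has to be argued from the explicit definition of $d$ in line 03 together with the structure of $P_{B}$: I would show that if $\tilde{x}_{i}^{j}=U_{i}$ and $(c_{p})_{i}>0$ (so $d_{i}=0$), then $y^{j}_{i}=\tilde{x}_{i}^{j}=U_{i}$ as well, so $\tilde{x}_{i}^{j+1}=U_{i}$ stays pinned, and symmetrically for the lower bound; moreover any coordinate that is \emph{newly} pinned after $P_{B}$ is pinned on the side consistent with the sign of $(c_{p})_{i}$ (since $y^{j}$ was obtained by adding a positive multiple of $c_{p}$ restricted to free coordinates, overshooting a bound only happens in the direction of $(c_{p})_{i}$). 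Once this invariant is nailed down, everything else is bookkeeping, and the $n$-step bound on the number of iterations drops out.
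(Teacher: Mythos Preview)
Your proposal is correct and takes essentially the same route as the paper: finite termination via monotonicity of the set of coordinates with $d_i=0$, the normal-cone argument for infeasibility when $\|d\|=0$, and optimality at exit by assembling the KKT multipliers from the accumulated steps. The paper streamlines matters by first reducing (via coordinate sign flips) to $c_p\geq0$, and it packages your explicit multiplier bookkeeping as a nested chain of cones $T_j\subsetneq T_{j+1}$ generated by $c_p$ and the active $-e_i$, but the substance is the same.

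One correction to your sketch: the projection $P_B$ in line~07 actually \emph{strictly decreases} $c_p^{T}x$ whenever it is nontrivial, since each clipped coordinate moves against the sign of $(c_p)_i$; that decrease is exactly what makes the while condition persist. So the right reason the $c_p$-constraint is tight at normal termination is not that ``$P_B$ does not decrease $c_p^{T}x$'' but that the final $y^{j^*-1}$ must already lie in $B$ (otherwise the loop would not have exited), whence $\tilde{x}^{j^*}=y^{j^*-1}$ and $c_p^{T}\tilde{x}^{j^*}=\hat{b}$ on the nose.
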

\begin{proof}
By multiplying components of $c_{p}$ by $-1$ and making the appropriate
adjustments to \eqref{eq:box-degen-QP} if necessary, we may assume
without loss of generality that $c_{p}\geq0$. Hence $d\geq0$.

We first prove that the algorithm cannot loop indefinitely. From the
formula $\tilde{x}^{j+1}=P_{B}(\tilde{x}^{j}+t_{2}d)$ and $d\geq0$,
we can infer that $c_{p}^{T}\tilde{x}^{j+1}>c_{p}^{T}\tilde{x}^{j}$.
It is also clear that $c_{p}^{T}\tilde{x}^{j+1}\leq c_{p}^{T}y^{j}=\hat{b}$.
By the design of $d$, an index active $i$ in $\tilde{x}^{j}$ (i.e.,
any index $i$ such that $\tilde{x}_{i}^{j}=U_{i}$) would result
in $d_{i}=0$, which in turn results in $i$ being an active index
in $\tilde{x}^{j+1}$. Furthermore, if $\tilde{x}^{j+1}\neq y^{j}$,
this means that there is an index active in $\tilde{x}^{j+1}$ that
is not active in $\tilde{x}^{j}$. There are only finitely many indices,
and so the algorithm cannot loop indefinitely.

To see that $\|d\|=0$ implies that the problem \eqref{eq:box-degen-QP}
is infeasible, note that $c_{p}$ would be a normal cone to $B$ at
$\tilde{x}^{j}$, signifying that the maximum value of $c_{p}^{T}x$
over all $x\in B$ equals $c_{p}^{T}\tilde{x}^{j}$, which is in turn
strictly less than $\hat{b}$. Thus the problem \eqref{eq:box-degen-QP}
is infeasible.

Lastly, we show that the final iterate of $\tilde{x}$ is the solution
of \eqref{eq:box-degen-QP}. Let the set $T_{j}$ be 
\[
T_{j}:=\mbox{conical hull of }[\{c_{p}\}\cup\{-e_{i}:i\mbox{ is an active index of }\tilde{x}^{j}\}].
\]
Note that by our earlier discussions, $T_{j+1}\supsetneq T_{j}$.
It is clear that $\tilde{x}^{j}-y^{j}\in T_{j}$ and $y^{j}-\tilde{x}^{j+1}\in T_{j+1}$.
Let $j^{*}$ be the final iteration of Algorithm \ref{alg:box}. 
We have $\tilde{x}^{0}-\tilde{x}^{j^{*}}\in T_{j^{*}}$, which gives
the the KKT conditions certifying the optimality of $\tilde{x}^{j^{*}}$
for \eqref{eq:box-degen-QP}. 
\end{proof}
In practice, one may stop at any of the $\tilde{x}^{j}$ or the $y^{j}$,
though it is sensible to perform all iterations of Algorithm \ref{alg:box}
and solve \eqref{eq:box-degen-QP} fully if the constraint $c_{p}^{T}x\geq\hat{b}$
was difficult to obtain.
\begin{rem}
(Comparison to Algorithm \ref{alg:SIP-GI}) Algorithm \ref{alg:box}
is actually Algorithm \ref{alg:SIP-GI} simplified for the case of
box constraints. Lines 3 to 6 correspond to the degenerate inner GI
step (Algorithm \ref{alg:degen_inner_GI}). Specifically, line 3 of
Algorithm \ref{alg:box} finds the optimal step direction, and corresponds
to step (a) of Algorithm \ref{alg:degen_inner_GI}, while lines 5
and 6 correspond to steps (b) and (c) of Algorithm \ref{alg:degen_inner_GI}.
Line 7 corresponds to a projection back onto the box. Besides Algorithm
\ref{alg:box} being much simpler than Algorithm \ref{alg:SIP-GI}
for the case of box constraints, the inner GI steps only require one
iteration to find the optimal step direction, and \eqref{eq:box-degen-QP}
can be solved fully after the appropriate number of steps.
\begin{rem}
\label{rem:aff-constraints}(Affine constraints) When some of the
sets in the SIP are affine constraints, the techniques of \cite{Pang_subBAP}
can be applied in a straightforward manner. Another approach for handling
affine spaces is \cite{HernandezRamosEscalanteRaydan2011}. Even though
the approach in \cite{HernandezRamosEscalanteRaydan2011} does not
appear to preserve Fej\'{e}r monotonicity, it uses a novel conjugate
gradient approach that gives some very good numerical results.  Idnani's
thesis \cite{Idnani_thesis} also elaborated on how to handle equality
constraints in the GI algorithm. 
\end{rem}
\end{rem}

\section{\label{sec:modified-ART}Finite convergence of a modified ART}

In this section, we look at finite convergence of a modified ART (Algebraic
Reconstruction Technique) algorithm. A version of the ART given in
\cite{Herman1975} solves the problem of finding a point in the intersection
of finitely many hyperslabs in $\mathbb{R}^{n}$, provided that the
intersection has nonempty interior. We look at how we can modify the
algorithm in \cite{Herman1975} so that one has the option of solving
QPs to accelerate performance, and still maintain finite convergence.

Consider the set 
\begin{equation}
S:=\{x\in\mathbb{R}^{n}:L\leq Ax\leq U\},\label{eq:rect-const}
\end{equation}
where $A\in\mathbb{R}^{m\times n}$ and each component of $L_{j}$
and $U_{j}$ lies in $\mathbb{R}\cup\{\pm\infty\}$ for $j\in\{1,\dots,m\}$.
We also let $S_{j}$ be the $j$th hyperslab defined by 
\[
S_{j}:=\{x\in\mathbb{R}^{n}:L_{j}\leq a_{j}^{T}x\leq U_{j}\},
\]
where $a_{j}^{T}$ is the $j$th row of $A$. The version of ART in
\cite{Herman1975}, referred in there as ART3, can be stated as follows:
\begin{algorithm}
\label{alg:ART_fin}\cite{Herman1975} (ART3) This algorithm finds
a point in $S$ defined in \eqref{eq:rect-const} in finitely many
iterations provided that $S$ has nonempty interior. Let $x^{0}\in\mathbb{R}^{n}$
be a starting iterate, and let $a_{j}^{T}$ be the $j$th row of $A$. 

\texttt{01} Set $i=0$, $j=1$, $k=m$

\texttt{02} Loop

\texttt{03} $\quad$if $j=k$, then $x\in S$ and STOP. Otherwise,
set $x^{i+1}$ by 
\begin{equation}
x^{i+1}=\begin{cases}
x^{i} & \mbox{if }a_{j}^{T}x^{i}\in[L_{j},U_{j}]\\
x^{i}+\frac{2(L_{j}-a_{j}^{T}x^{i})}{a_{j}^{T}a_{j}}a_{j} & \mbox{if }a_{j}^{T}x^{i}\in[L_{j}-\frac{1}{2}(U_{j}-L_{j}),L_{i}]\\
x^{i}+\frac{2(U_{j}-a_{j}^{T}x^{i})}{a_{j}^{T}a_{j}}a_{j} & \mbox{if }a_{j}^{T}x^{i}\in[U_{j},U_{j}+\frac{1}{2}(U_{j}-L_{j})]\\
x^{i}+\frac{\frac{1}{2}(L_{j}+U_{j})-a_{j}^{T}x^{i}}{a_{j}^{T}a_{j}}a_{j} & \mbox{otherwise.}
\end{cases}\label{eq:extrapolate}
\end{equation}

\texttt{04} $\quad$if $x^{i+1}\neq x^{i}$, then $k\leftarrow j$.

\texttt{05} $\quad$$i\leftarrow i+1$

\texttt{06} $\quad$$j\leftarrow\mbox{mod}(j+1,m)$. (i.e., $j\leftarrow j+1$
if $j<m$, and $j\leftarrow1$ if $j=m$.)

\texttt{07} end Loop

\end{algorithm}
We elaborate on the formula \eqref{eq:extrapolate}. The point $x^{i+1}$
is obtained from $x^{i}$ as follows: 
\begin{eqnarray*}
t & := & \max\big\{ t^{\prime}:P_{S_{j}}(x^{i})+t^{\prime}\big(P_{S_{j}}(x^{i})-x^{i}\big)\in S_{j}\big\}\\
\bar{t} & := & \min\{t/2,1\}\\
x^{i+1} & = & P_{S_{j}}(x^{i})+\bar{t}\big(P_{S_{j}}(x^{i})-x^{i}\big).
\end{eqnarray*}
The idea is to reflect $x^{i}$ about $P_{S_{j}}(x^{i})$ while staying
as far inside the hyperslab $S_{j}$ as possible.

The ART3 is proven to converge in finitely many iterations to a point
in $S$ as defined in \eqref{eq:rect-const}.
\begin{thm}
\cite{Herman1975} (Finite convergence of ART3) If $S$ in \eqref{eq:rect-const}
has nonempty interior, then Algorithm \ref{alg:ART_fin} converges
to a point in $S$ in finitely many iterations.
\end{thm}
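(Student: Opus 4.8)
The plan is the one familiar from relaxation methods for linear inequalities (Agmon--Motzkin--Schoenberg): track $d_i:=\|x^i-c\|$ for a point $c$ lying deep inside $S$, prove a quantitative Fej\'er-type descent, and then use the cyclic control (the variable $k$) to upgrade ``convergence to a point of $S$'' into genuine finite termination. The extra structural ingredient is that the ART3 update \eqref{eq:extrapolate} is a pure reflection with relaxation parameter $2$, damped to the projection onto the mid-hyperplane of the slab precisely when the violation exceeds half the slab width.

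\textbf{Step 1 (set-up).} Since $\intr S\neq\emptyset$, fix $c$ and $\rho>0$ with the closed ball $B(c,\rho)\subseteq S$. As $B(c,\rho)\subseteq S\subseteq S_j$ for every $j$, the point $c$ is $\rho$-deep in each hyperslab: $L_j+\rho\|a_j\|\le a_j^Tc\le U_j-\rho\|a_j\|$ when $L_j,U_j$ are finite, with the obvious one-sided inequality when one is infinite. (Slabs with $a_j=0$ equal $\mathbb{R}^n$, and $L_j=U_j$ would force $\intr S=\emptyset$, so $a_j\neq 0$ and $U_j>L_j$ throughout.) I would then record that at iteration $i$, \eqref{eq:extrapolate} performs exactly one of: a \emph{null step} $x^{i+1}=x^i$; a \emph{reflection step} $x^{i+1}=2P_{S_j}(x^i)-x^i$, the Euclidean reflection of $x^i$ across the near bounding hyperplane of $S_j$, landing inside $S_j$ at depth $\tfrac12\|x^{i+1}-x^i\|$ from that hyperplane (the only nontrivial case when $S_j$ is a halfspace); or a \emph{mid-hyperplane step}, the projection of $x^i$ onto $\{x:a_j^Tx=\tfrac12(L_j+U_j)\}$.

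\textbf{Step 2 (descent and convergence of the iterates).} For each of the three moves a direct computation, using only the $\rho$-deepness of $c$, gives $d_{i+1}\le d_i$, and for every \emph{change step} the sharper estimate
\[
d_i^2-d_{i+1}^2\ \ge\ 2\rho\,\|x^{i+1}-x^i\| .
\]
(For the reflection step write $d_i^2-d_{i+1}^2=-4\,(P_{S_j}(x^i)-c)^T(P_{S_j}(x^i)-x^i)$ and bound the inner product below by $\rho$-deepness; the mid-hyperplane step gives the same bound, in fact $d_i^2-d_{i+1}^2\ge 2\rho(U_j-L_j)/\|a_j\|$, bounded away from $0$.) Telescoping yields $\sum_i\|x^{i+1}-x^i\|\le d_0^2/(2\rho)<\infty$, so $\{x^i\}$ is Cauchy; put $x^*=\lim_i x^i$. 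If $x^*\notin S$, then $x^*$, hence every $x^i$ for $i$ large, violates some fixed slab $S_{j_0}$ by a bounded amount; since $S_{j_0}$ is processed every $m$ iterations, infinitely many change steps at $S_{j_0}$ occur, each dropping $d_i^2$ by a fixed positive amount, contradicting $\sum_i(d_i^2-d_{i+1}^2)\le d_0^2$. Hence $x^*\in S$.

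\textbf{Step 3 (finite termination --- the crux).} It remains to rule out infinitely many change steps; this is the only place where convergence must be upgraded to finiteness, and the step I expect to be the real content. Suppose there are infinitely many. Between consecutive change steps the iterate is constant, and by the stop rule of Algorithm \ref{alg:ART_fin} (once $k$ records the last changed slab, one clean cycle halts the algorithm) consecutive change steps are fewer than $m$ iterations apart; let $y_1,y_2,\dots$ be the successive distinct iterate values, so $y_{k+1}$ is the move at the next change step applied to $y_k$. For $i$ large, $\|x^i-x^*\|$ is as small as we like, hence: (a) every slab with $a_j^Tx^*$ strictly between $L_j$ and $U_j$ is strictly satisfied and never changes again; (b) for the remaining slabs --- those with $a_j^Tx^*\in\{L_j,U_j\}$ --- the violation is far below half the slab width, so the mid-hyperplane branch is inactive and every change is a reflection step. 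Thus eventually $y_{k+1}=R_j(y_k)$, where $R_j$ is the reflection across $\{a_j^Tx=L_j\}$ or $\{a_j^Tx=U_j\}$; since this hyperplane passes through $x^*$, $R_j$ is a Euclidean isometry fixing $x^*$, so $\|y_{k+1}-x^*\|=\|y_k-x^*\|$ for all large $k$. Combined with $y_k\to x^*$ this forces $\|y_k-x^*\|=0$ eventually, i.e.\ $x^i=x^*\in S$ from some iteration on; then every step is null and the cyclic check halts the algorithm within $m$ more iterations, contradicting the assumption. The conceptual key is exactly point (b) together with the observation that reflections in ART3 near a feasible limit are across hyperplanes through that limit and so cannot move the iterate closer to it; this is what converts the summable, Fej\'er-monotone behaviour into termination in finitely many steps, and it is where $\intr S\neq\emptyset$ is essential (without it one cannot place a full ball inside $S$ and the limit could be ``touched'' rather than reached).
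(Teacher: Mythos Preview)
The paper does not prove this theorem itself; it is stated with a citation to \cite{Herman1975} and no proof is given. Your argument is correct, and it is in fact the same argument the paper supplies later for the extended algorithm (Theorem~\ref{thm:fin-conv-ext-ART}): Fej\'er monotonicity with respect to $S$, convergence of the iterates using $\intr S\neq\emptyset$, the limit $\bar x$ lying in $S$, and then the endgame observation that near $\bar x$ the mid-hyperplane branch is inactive and only reflections across hyperplanes through $\bar x$ can occur, so $\|x^i-\bar x\|$ is eventually constant, contradicting convergence unless the limit is reached in finitely many steps. Where you give an explicit quantitative descent $d_i^2-d_{i+1}^2\ge 2\rho\|x^{i+1}-x^i\|$ and telescope, the paper simply cites a standard Fej\'er-monotonicity convergence result \cite[Theorem~4.5.10(iii)]{BZ05}; the decisive Step~3 is identical in both.
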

We now extend Algorithm \ref{alg:ART_fin} so that one can perform
QP steps in an attempt to improve its convergence. To simplify notation,
define $\tilde{L}_{j}$ and $\tilde{U}_{j}$ as follows.

\begin{equation}
\tilde{L}_{j}:=L_{j}-\frac{1}{2}(U_{j}-L_{j}),\quad\tilde{U}_{j}:=U_{j}+\frac{1}{2}(U_{j}-L_{j}).\label{eq:tilde-L-U}
\end{equation}

In Algorithm \ref{alg:ext-ART} below, we shall keep a record of three
points $x_{\circ}^{i}$, $x_{\times}^{i}$ and $x_{+}^{i}$ at each
iteration $i$. The point $x_{\times}^{i}$ is the projection of $x_{\circ}^{i}$
onto a polyhedron defined by a few halfspaces that had been identified
earlier. The point $x_{+}^{i}$ is the extrapolation of this projection.
This is explained in Figure \ref{fig:ART}.
\begin{rem}
\label{rem:get-x-plus}(Finding $x_{+}^{i}$ given $x_{\circ}^{i}$
and $x_{\times}^{i}$) Refer to Figure \ref{fig:ART}. The point $x_{+}^{i}$
is found by extrapolating $x_{\times}^{i}$ along the direction $x_{\times}^{i}-x_{\circ}^{i}$
so that $x_{+}^{i}$ gets inside the interior of the intersection
of the hyperslabs active at $x_{\times}^{i}$ as far as possible.
More precisely, let $J^{i}$ be the set of indices of hyperslabs $S_{j}$
active at $x_{\times}^{i}$. Then calculate 
\begin{eqnarray*}
t & = & \max\{t^{\prime}:x_{\times}^{i}+t^{\prime}(x_{\times}^{i}-x_{\circ}^{i})\in S_{j}\mbox{ for all }j\in J^{i}\}\\
\bar{t} & = & \min\{t/2,1\}\\
x_{+}^{i} & = & x_{\times}^{i}+\bar{t}(x_{\times}^{i}-x_{\circ}^{i}).
\end{eqnarray*}

\begin{figure}[h]
\includegraphics[scale=0.4]{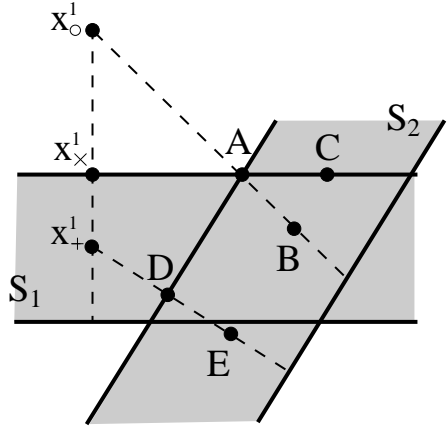}

\caption{\label{fig:ART}Consider the problem of finding a point in $S_{1}\cap S_{2}$,
where $S_{1}$ and $S_{2}$ are both hyperslabs. This diagram shows
how $x_{\circ}^{1}$, $x_{\times}^{1}$ and $x_{+}^{1}$ line up:
$x_{\times}^{1}=P_{S_{1}}(x_{\circ}^{1})$, and $x_{+}^{1}$ is of
the form $x_{\times}^{1}+t(x_{\times}^{1}-x_{\circ}^{1})$, where
$t$ is the largest number so that $x_{\times}^{1}+2t(x_{\times}^{1}-x_{\circ}^{1})\in S_{1}$.
\protect \\
When step ($P_{\circ}$) is performed, then $(x_{\circ}^{2},x_{\times}^{2},x_{+}^{2})=(x_{\circ}^{1},A,B)$.
\protect \\
When step ($P_{\times}$) is performed, then $(x_{\circ}^{2},x_{\times}^{2},x_{+}^{2})=(x_{\times}^{1},A,C)$.
\protect \\
When step ($P_{+}$) is performed, then $(x_{\circ}^{2},x_{\times}^{2},x_{+}^{2})=(x_{+}^{1},D,E)$.
\protect \\
If both ($P_{\circ}$) and ($P_{\times}$) are both possible, then
($P_{\times}$) is likely to be better because $x_{\circ}^{i+1}$,
$x_{\times}^{i+1}$ and $x_{+}^{i+1}$ are likely to have more constraints
active. (The exact situation depends on the inner GI steps.)\protect \\
If both ($P_{\circ}$) and ($P_{+}$) are possible, then ($P_{\circ}$)
is likely to be better because $x_{\times}^{i+1}$ and $x_{+}^{i+1}$
are likely to satisfy more constraints. }

\end{figure}
\end{rem}
\begin{algorithm}
\label{alg:ext-ART}(Extended ART) This algorithm finds a point in
$S$ defined in \eqref{eq:rect-const}. Let $x^{0}\in\mathbb{R}^{n}$
be a starting iterate, and let $a_{j}^{T}$ be the $j$th row of $A$.
Let $\tilde{L}_{j}$ and $\tilde{U}_{j}$ be defined as in \eqref{eq:tilde-L-U}.

Set $i=0$, $j=1$, $k=m$, $x_{\circ}^{0}=x^{0}$, $x_{\times}^{0}=x^{0}$
and $x_{+}^{0}=x^{0}$.

Loop

$\quad$Analyze cases:

$\quad$$\quad$\textbf{Case 1: }$a_{j}^{T}x_{+}\in[L_{j},U_{j}]$

$\quad$$\quad$$\quad$Do nothing: $x_{\circ}^{i+1}\leftarrow x_{\circ}^{i}$,
$x_{\times}^{i+1}\leftarrow x_{\times}^{i}$ and $x_{+}^{i+1}\leftarrow x_{+}^{i}$.

$\quad$$\quad$\textbf{Case 2:} $a_{j}^{T}x_{+}\in[\tilde{L}_{j},L_{j})\cup(U_{j},\tilde{U}_{j}]$
and $a_{j}^{T}x_{\times}\notin[L_{j},U_{j}]$

$\quad$$\quad$$\quad$Perform step ($P_{\circ}$) (preferred) or
($P_{+}$).

$\quad$$\quad$\textbf{Case 3:} $a_{j}^{T}x_{+}\in[\tilde{L}_{j},L_{j})\cup(U_{j},\tilde{U}_{j}]$
and $a_{j}^{T}x_{\times}\in[L_{j},U_{j}]$

$\quad$$\quad$$\quad$Perform step ($P_{+}$).

$\quad$$\quad$\textbf{Case 4:} $a_{j}^{T}x_{+}\notin[\tilde{L}_{j},\tilde{U}_{j}]$
and $a_{j}^{T}x_{\times}\notin[L_{j},U_{j}]$

$\quad$$\quad$$\quad$Perform step ($P_{\circ}$) or ($P_{\times}$)
(preferred) or ($P_{+}$).

$\quad$$\quad$\textbf{Case 5: $a_{j}^{T}x_{+}\notin[\tilde{L}_{j},\tilde{U}_{j}]$}
and $a_{j}^{T}x_{\times}\in[L_{j},U_{j}]$

$\quad$$\quad$$\quad$Perform step ($P_{\times}$) (preferred) or
($P_{+}$).

$\quad$end analyze cases.

$\quad$$i\leftarrow i+1$

$\quad$$j\leftarrow\mbox{mod}(j+1,m)$.

end Loop

$\quad$

\textbf{Step ($P_{\circ}$): Update $x_{\circ}^{i+1}$ from $x_{\circ}^{i}$.}

Feed the violated constraint (violated by $x_{\times}^{i}$) into
the inner GI step (Algorithm \ref{alg:Inner-GI}) to improve on the
problem of projecting from $x_{\circ}^{i}$. This is done by setting
$x_{\circ}^{i+1}=x_{\circ}^{i}$ and updating $x_{\times}^{i+1}$
by using inner GI steps from $x_{\times}^{i}$. Use Remark \ref{rem:get-x-plus}
to get $x_{+}^{i+1}$.

$\,$

\textbf{Step ($P_{\times}$): Update $x_{\circ}^{i+1}$ from $x_{\times}^{i}$.
We can build active sets with this step.}

Feed the violated constraint (violated by $x_{\times}^{i}$) into
the degenerate inner GI step (Algorithm \ref{alg:degen_inner_GI})
from the starting point $x_{\times}^{i}$ to get $x_{\circ}^{i+1}=x_{\times}^{i}$
and the new points $x_{\times}^{i+1}$ and $x_{+}^{i+1}$. Use Remark
\ref{rem:get-x-plus} to get $x_{+}^{i+1}$.

$\,$

\textbf{Step ($P_{+}$): Update $x_{\circ}^{i+1}$ from $x_{+}^{i}$.}

Feed the violated constraint (violated by $x_{+}^{i}$) into the degenerate
inner GI step (Algorithm \ref{alg:degen_inner_GI}) from the starting
point $x_{+}^{i}$ to get $x_{\circ}^{i+1}=x_{+}^{i}$ and the new
points $x_{\times}^{i+1}$ and $x_{+}^{i+1}$. Use Remark \ref{rem:get-x-plus}
to get $x_{+}^{i+1}$.

\end{algorithm}
The paper \cite{HermanChen08} pointed out that a cyclic order in
checking the constraints $a_{j}^{T}x\in[L_{j},U_{j}]$ is not necessarily
the most effective order. The strategy of keeping active sets through
the QP algorithm can be considered a way to improve on the cyclic
order in checking the constraints. 
\begin{rem}
(Preference of steps in Algorithm \ref{alg:ext-ART}) We list down
the reasons why we make certain choices about the steps performed
for each of the cases highlighted in Algorithm \ref{alg:ext-ART}.
\begin{enumerate}
\item (on ($P_{\times}$)) The reason why we do not perform step ($P_{\times}$)
for cases 2 and 3 (i.e., when $a_{j}^{T}x_{+}\in[\tilde{L}_{j},L_{j})\cup(U_{j},\tilde{U}_{j}]$)
is that performing ($P_{\times}$) disrupts the final part of the
proof of Theorem \ref{thm:fin-conv-ext-ART}. However, we still make
performing the step ($P_{\times}$) the preferred choice for cases
4 and 5. The paper \cite{Herman1975} cites \cite{Goffin71} in pointing
out that ``for rapid convergence, it is a good strategy to use projections
when we are far away from the hyperslab and reflections when we are
near it''. Another advantage of ($P_{\times}$) that we have seen
throughout this paper is that we can build active sets using the inner
GI steps that may accelerate the convergence of the problem at hand.
\item (on ($P_{\circ}$)) This step requires that $x_{\times}^{i}$ be outside
the slab $\{x:L_{j}\leq a_{j}^{T}x\leq U_{j}\}$ so that one can apply
the inner GI step with the appropriate face of the hyperslab being
used as a violated constraint. Therefore, ($P_{\circ}$) cannot be
performed in cases 3 and 5 when $a_{j}^{T}x_{\times}\in[L_{j},U_{j}]$.
In case 2, we marked that this step is preferred, but we recall Theorem
\ref{thm:fin-conv-GI} and Remark \ref{rem:more_on_inner_GI}. Furthermore,
if $x_{\times}$ is close to but not in the hyperslab, then it may
be better to perform ($P_{+}$) instead.
\end{enumerate}
\end{rem}
We now prove that the finite convergence of Algorithm \ref{alg:ext-ART}.
\begin{thm}
\label{thm:fin-conv-ext-ART}(Finite convergence of Algorithm \ref{alg:ext-ART})
Suppose that the feasible region $S$ defined in \eqref{eq:rect-const}
has nonempty interior. Then there is an $i$ such that $x_{+}^{i}\in S$
in Algorithm \ref{alg:ext-ART}.\end{thm}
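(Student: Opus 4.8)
The plan is to adapt the finite-convergence argument for ART3 (Algorithm \ref{alg:ART_fin}) to the extended scheme. The key invariant to establish is that the sequence $\{x_{+}^{i}\}$ behaves, for the purpose of the convergence proof, like the ART3 iterates: each time a constraint $a_{j}^{T}x\in[L_{j},U_{j}]$ is violated ``badly'' (i.e.\ $a_{j}^{T}x_{+}^{i}\notin[\tilde{L}_{j},\tilde{U}_{j}]$, Cases 4 and 5) the scheme performs a genuine projection-type move, and when the violation is mild (Cases 2 and 3) it performs a reflection-type move — exactly the dichotomy in Herman's proof quoted via \cite{Goffin71}. First I would record, using Proposition \ref{prop:degen_QP}, Proposition \ref{prop:reduc-1}, and Theorem \ref{thm:fin-conv-GI}(a), that after any of the steps $(P_{\circ})$, $(P_{\times})$, $(P_{+})$ the new point $x_{\times}^{i+1}$ is the exact projection of $x_{\circ}^{i+1}$ onto the intersection of some collection of halfspaces each of which is one face of one of the hyperslabs $S_{j}$, and hence contains $S$. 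Consequently $x_{\times}^{i+1}$ is no farther from any point of $S$ than $x_{\circ}^{i+1}$ is — this is the Fej\'er-type estimate that will drive the whole argument. The point $x_{+}^{i+1}$ is then obtained from $x_{\times}^{i+1}$ by the controlled extrapolation of Remark \ref{rem:get-x-plus}, whose step size $\bar t=\min\{t/2,1\}$ is precisely the ART3 safeguard that keeps the extrapolate inside every hyperslab active at $x_{\times}^{i+1}$.

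Next I would fix an arbitrary $\hat x$ in the interior of $S$ and track the quantity $\|x_{\circ}^{i}-\hat x\|$. In Cases 4 and 5 the violated hyperslab face separates $x_{+}^{i}$ (or $x_{\times}^{i}$) from $S$ by a definite amount, and the projection/degenerate-inner-GI move strictly decreases the distance to $\hat x$, with a decrement bounded below in terms of the violation $\mathrm{dist}(a_j^T x, [L_j,U_j])$, exactly as in \cite{Herman1975}; since $\hat x$ is interior, there is a uniform lower bound $\rho>0$ on how close to feasibility one can get while still being in Case 4 or 5 for some $j$, so Cases 4 and 5 can occur only finitely often. After the last occurrence of a Case-4/5 step, the bookkeeping variable $k$ and the cyclic index $j$ behave exactly as in ART3: the only remaining possibilities are Case 1 (do nothing) and Cases 2–3 (a reflection-type step on $x_{+}$). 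Here I would invoke the remaining part of Herman's ART3 argument verbatim: once only reflections (and no-ops) remain, a full cycle through $j=1,\dots,m$ in which every constraint is in Case 1 must eventually occur, at which point $x_{+}^{i}\in S_{j}$ for every $j$, i.e.\ $x_{+}^{i}\in S$, and the algorithm halts; if not, some reflection step strictly decreases $\|x_{+}^{i}-\hat x\|$ by a uniformly bounded amount, contradicting boundedness of the sequence. The restriction (noted in the Remark following the algorithm) that $(P_{\times})$ is forbidden in Cases 2 and 3 is exactly what guarantees that in the ``mild violation'' regime the move applied to $x_{+}^{i}$ is a pure ART3 reflection about a single hyperslab face, so that Herman's termination lemma applies unchanged.

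The main obstacle I anticipate is the interaction between the three tracked points and the dropping of constraints inside the (degenerate) inner GI step. One must be careful that $x_{\circ}^{i}$, $x_{\times}^{i}$, $x_{+}^{i}$ stay correctly ``lined up'' — $x_{\times}^{i}=P_{\text{polyhedron}}(x_{\circ}^{i})$ with the polyhedron containing $S$, and $x_{+}^{i}$ its safeguarded extrapolate — across all three step types, even when the inner GI step discards previously active faces; this is where Theorem \ref{thm:fin-conv-GI}(a) (each inner GI iterate is itself an exact projection onto a subset of the constraints) is essential, and it ensures the containment ``polyhedron $\supseteq S$'' is never lost because every face used is a face of some $S_j\supseteq S$. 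A secondary delicate point is producing the uniform decrement bounds: these require that the separation distance in a Case-4/5 step is bounded below, which follows because $x_{+}^{i}$ lies outside $[\tilde L_j,\tilde U_j]$, a slab strictly larger than $S_j$, so the violation is at least $\tfrac12(U_j-L_j)$ away from the near face when $U_j-L_j$ is finite (the $\pm\infty$ faces never trigger Cases 2–5). Assembling these uniform bounds with the boundedness of $\{x_{\circ}^{i}\}$ (Fej\'er monotone with respect to $\hat x$) then closes the argument by the same counting/contradiction step used for ART3.
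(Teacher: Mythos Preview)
Your overall architecture is close to the paper's, but the endgame contains a genuine gap. You claim that once only Cases 1--3 remain, ``some reflection step strictly decreases $\|x_{+}^{i}-\hat x\|$ by a uniformly bounded amount, contradicting boundedness.'' This is false: in Cases 2 and 3 the violation $\mathrm{dist}(a_{j}^{T}x_{+}^{i},[L_{j},U_{j}])$ lies in $(0,\tfrac12(U_{j}-L_{j})]$ and can be arbitrarily small, and the Fej\'er decrement of a (partial) reflection about a face of $S_{j}$ scales with that violation, so there is no uniform floor and the counting argument collapses. This is also not how Herman's ART3 proof actually works, so ``invoking Herman verbatim'' does not rescue the step. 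A secondary loose end: your uniform-decrement claim for Cases 4/5 is only clean when $(P_{+})$ is chosen; if $(P_{\times})$ or $(P_{\circ})$ is taken, it is $x_{+}^{i}$ and not $x_{\circ}^{i+1}$ that is guaranteed to be far from $S_{j}$, so the decrement in $\|x_{\circ}^{i}-\hat x\|$ need not be bounded below.

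The paper (following Herman) avoids uniform decrements entirely. One extracts from $\{x_{\circ}^{i}\}$ the subsequence $\{\tilde x^{i}\}$ obtained by discarding $(P_{\circ})$-stalls; each transition is a projection or partial reflection onto a convex set containing $S$, hence Fej\'er monotone with respect to $S$, and since $\mathrm{int}\,S\neq\emptyset$ this forces $\tilde x^{i}\to\bar x$ for some $\bar x\in\partial S$. Near $\bar x$ only hyperslabs with $\bar x$ on their boundary can be violated, and for these $a_{j}^{T}x_{+}^{i}\in[\tilde L_{j},\tilde U_{j}]$, so Cases 4/5 (hence $(P_{\times})$) eventually cease --- the same conclusion as your Step~1, but obtained from convergence rather than from a decrement bound. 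The contradiction is then the \emph{opposite} of a decrement: the remaining $(P_{+})$ moves are reflections about faces that all pass through $\bar x$, so $\|\tilde x^{i}-\bar x\|$ is eventually constant and positive, contradicting $\tilde x^{i}\to\bar x$. The idea you are missing is to measure distance to the limit $\bar x$, where the relevant reflections are isometries, rather than to a fixed interior point $\hat x$, where the decrement can vanish.
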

\begin{proof}
Seeking a contradiction, suppose that $x_{+}^{i}\notin S$ for all
$i>0$. Rewrite the iterates $\{x_{\circ}^{i}\}$ as a subsequence
$\{\tilde{x}^{i}\}$ where for each $i$, either we have a projection
$\tilde{x}^{i+1}=P_{S_{i}}(\tilde{x}^{i})$ (when a step ($P_{\times}$)
is performed) or a partial reflection $\tilde{x}^{i+1}=P_{S_{i}}(\tilde{x}^{i})+\bar{t}(P_{S_{i}}(\tilde{x}^{i})-\tilde{x}^{i})$
for some $\bar{t}\in[0,1]$ (when a step ($P_{+}$) is performed).
Note that when a step ($P_{\circ}$) is performed at iteration $j$,
we have $x_{\circ}^{j+1}=x_{\circ}^{j}$, so we do not add a new term
for $\tilde{x}^{i}$. 

The number of times the step ($P_{\circ}$) is performed consecutively
cannot be infinite in view of Theorem \ref{thm:fin-conv-GI}. Therefore,
the sequence $\{\tilde{x}^{i}\}$ is infinite.

Since the sequence $\{\tilde{x}^{i}\}$ is formed by projections and
(partial) reflections about the convex sets containing $S$, it is
a standard result that the sequence $\{\tilde{x}^{i}\}$ is Fej\'{e}r
monotone with respect to $S$, i.e., 
\[
\|\tilde{x}^{i+1}-s\|\leq\|\tilde{x}^{i}-s\|\mbox{ for all }s\in S.
\]
Since $S$ was assumed to have nonempty interior, \cite[Theorem 4.5.10(iii)]{BZ05}
implies that the iterates $\{\tilde{x}^{i}\}$ converges to some point,
say $\bar{x}$. It is clear that this limit $\bar{x}$ cannot lie
outside any hyperslab $\{x:L_{j}\leq a_{j}^{T}x\leq U_{j}\}$, and
hence cannot lie outside $S$. If this limit $\bar{x}$ lies in the
interior of $S$, then we would have gotten finite convergence. Thus
this limit is on the boundary of $S$. 

There is a neighborhood $V$ of $\bar{x}$ such that for any $y\in V$
and any constraint $\{x:L_{j}\leq a_{j}^{T}x\leq U_{j}\}$ tight at
$\bar{x}$, we have $a_{j}^{T}y\in[\tilde{L}_{j},\tilde{U}_{j}]$,
where $\tilde{L_{j}}$ and $\tilde{U}_{j}$ are defined in \eqref{eq:tilde-L-U}.
Therefore once close enough to $\bar{x}$, step ($P_{\times}$) will
not be performed. This will mean that all the steps that are performed
are reflections. But these reflections would mean that $\{\|\tilde{x}^{i}-\bar{x}\|\}$
remains constant after some point, contradicting the assumption that
$\{\tilde{x}^{i}\}$ converges to $\bar{x}$. 
\end{proof}

\section{\label{sec:more_degen_inner_GI}More on the degenerate inner GI step}

In this section, we introduce an additional step (Algorithm \ref{alg:degen-GI-a-plus})
after step (a) of Algorithm \ref{alg:degen_inner_GI}, and show its
relationship with the inner GI step (Algorithm \ref{alg:Inner-GI})
and the degenerate inner GI step (Algorithm \ref{alg:degen_inner_GI})
in Theorem \ref{thm:a-plus-inn-GI}.  We also show how to reduce
the dimensions of the QP needed to be solved for Algorithm \ref{alg:SIP-GI}
(SIP). These observations can be built on to improve algorithms for
the SIP.

\begin{figure}[h]
\begin{tabular}{cc}
\begin{tabular}{c}
\includegraphics[scale=0.35]{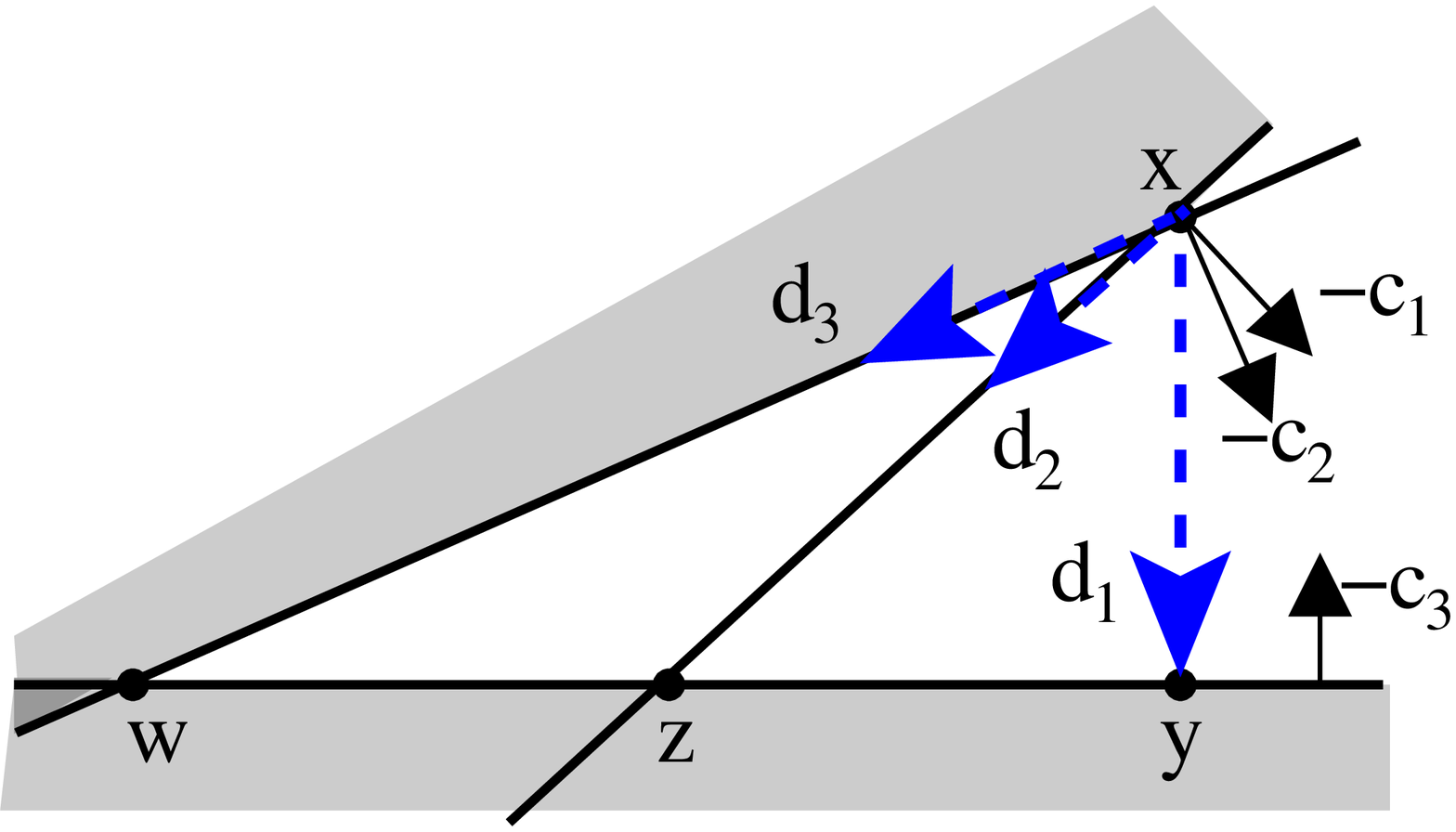}\tabularnewline
\end{tabular} & %
\begin{tabular}{c}
\includegraphics[scale=0.5]{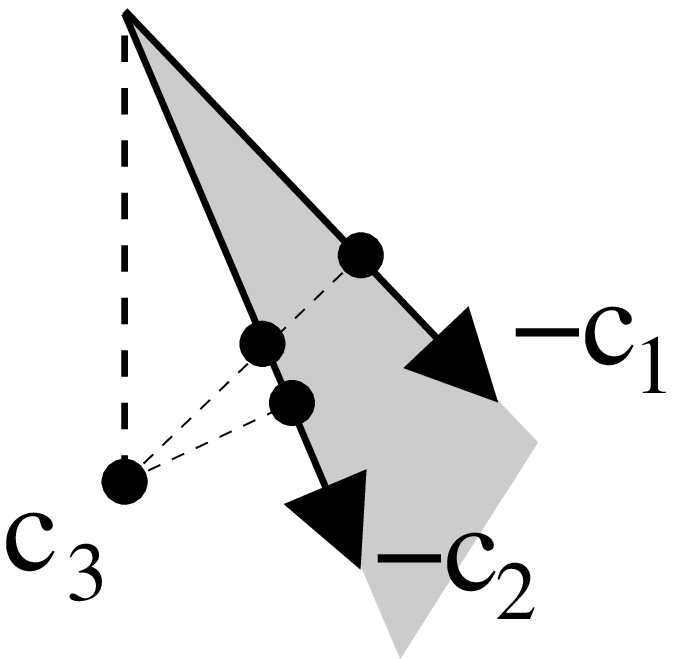}\tabularnewline
\end{tabular}\tabularnewline
\end{tabular}

\caption{\label{fig:more_sliding}In the diagram on the left, the degenerate
inner GI step (Algorithm \ref{alg:degen_inner_GI}) may not find the
correct set of active constraints to slide along in solving \eqref{eq:degen_QP_1}.
The degenerate inner GI step might find $z$ by sliding along the
constraints defined by the normal $c_{1}$, or find the point $y$
after dropping all constraints active at $x$ instead of finding the
optimal point $w$. One approach to find $w$ even if we didn't land
in $w$ in the first try is to introduce the violated constraints
into the inner GI step (Algorithm \ref{alg:Inner-GI}). Another approach
is to find a better direction to slide along by running as many steps
of Algorithm \ref{alg:degen-GI-a-plus} as needed between steps (a)
and (b) of Algorithm \ref{alg:degen_inner_GI}. The second approach
is equivalent to projecting $c_{3}$ onto the conical hull of $\{-c_{1},-c_{2}\}$,
as elaborated in the rest of Section \ref{sec:more_degen_inner_GI}.}
\end{figure}

\begin{prop}
\label{prop:z-of-degen-GI}(Step (a) of Algorithm \ref{alg:degen_inner_GI})
At the end of step (a) of Algorithm \ref{alg:degen_inner_GI},  $c_{p}-z=P_{\scriptsize\cone(-N)}(c_{p})$.\end{prop}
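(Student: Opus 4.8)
The plan is to exhibit $w := c_p - z$ directly as the point of $\cone(-N)$ nearest to $c_p$. I would check the two ingredients of the usual projection characterization — that $w$ lies in the cone, and that $c_p - w$ is orthogonal to a subspace containing the cone — the second being stronger than the obtuse-angle inequality one normally needs, which makes optimality immediate by Pythagoras. Everything follows from the QR bookkeeping maintained inside step (a).

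First I would dispose of the degenerate exit: if the loop leaves $q = 0$, then $N$ is empty, $z = c_p$, $\cone(-N) = \{0\}$, and the asserted identity reads $c_p - z = 0$, which holds. Otherwise $q > 0$ and the loop exited precisely because $r = R^{-1}Q^Tc_p \le 0$, while $(Q,R)$ is the economy QR factorization of the current $N$, so $Q^TQ = I$ and $N = QR$ with $R$ invertible. Then $Q^Tc_p = Rr$, and hence
\[
c_p - z \;=\; c_p - (I - QQ^T)c_p \;=\; QQ^Tc_p \;=\; QRr \;=\; Nr .
\]
Because every component of $r$ is $\le 0$, this displays $w = c_p - z$ as a nonnegative combination of the columns of $-N$, so $w \in \cone(-N)$. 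For the orthogonality I would compute $N^Tz = R^TQ^T(I - QQ^T)c_p = 0$ (using $Q^TQ = I$), so $z = c_p - w$ is orthogonal to every column of $N$, hence to all of $\mspan(N)$, and in particular to $w$ and to every $v \in \cone(-N)$. Thus for any $v \in \cone(-N)$, writing $c_p - v = z + (w - v)$ with $z \perp (w - v)$ gives $\|c_p - v\|^2 = \|z\|^2 + \|w - v\|^2 \ge \|c_p - w\|^2$, with equality only when $v = w$; so $w = P_{\cone(-N)}(c_p)$.

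I do not expect any real obstacle: this is the same KKT-type computation that underpins Proposition \ref{prop:degen_QP}, and the only places needing a word of care are the degenerate case $q = 0$ and the standing invariant — preserved by the GI column updates — that the columns of $N$ stay linearly independent, so that $R$ is invertible and $Q^Tc_p = Rr$ is legitimate. If desired, one can also read the statement informally as: step (a) keeps dropping columns until $c_p$ ``points into'' the current cone (all multipliers $r_j \le 0$), at which moment $QQ^Tc_p = Nr$ is exactly the orthogonal-projection part of $c_p$ onto $\mspan(N)$ and its coefficient vector is cone-admissible.
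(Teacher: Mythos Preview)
Your proof is correct and follows essentially the same route as the paper's: both compute $c_p - z = QQ^Tc_p = Nr$ with $r \le 0$ to place $c_p - z$ in $\cone(-N)$, and both rely on $z \perp \mspan(N)$ (the paper phrases this as $c_p - z = P_{\mspan(-N)}(c_p)$) to conclude optimality. Your version is a bit more explicit --- you spell out the Pythagoras step and separately handle the $q=0$ exit, which the paper leaves implicit --- but the substance is identical.
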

\begin{proof}
Note that $c_{p}-z=QQ^{T}c_{p}$ and $(Q,R)$ is the QR factorization
of $N$. So $c_{p}-z=P_{\scriptsize\mspan(-N)}(c_{p})$. Next, note
that by the time step (a) ends, $r\leq0$. Furthermore, 
\begin{eqnarray*}
r & = & R^{-1}Q^{T}c_{p}\\
\Rightarrow Nr & = & QQ^{T}c_{p}=c_{p}-z.
\end{eqnarray*}
In other words, $c_{p}-z$ is also $P_{\scriptsize\cone(-N)}(c_{p})$
as claimed.
\end{proof}
Proposition \ref{prop:z-of-degen-GI} states that the Algorithm \ref{alg:degen_inner_GI}
tries to find $z$ and the active set $J$ so that $c_{p}-z=P_{\scriptsize\cone(-N)}(c_{p})$.
(Readers familiar with the Moreau Theorem and the (negative) polar
cone will notice that an equivalent way to write this statement is
$z=P_{[\scriptsize\cone(-N)]^{-}}(c_{p})$, where $C^{-}$ refers
to the polar cone of $C$.) In trying to find this $z$ and $J$,
Algorithm \ref{alg:degen_inner_GI} may find a suboptimal active set
$J$ so that $c_{p}-z=P_{\scriptsize\cone(-N)}(c_{p})$ holds. 

Instead of running Algorithm \ref{alg:degen_inner_GI} and then running
Algorithm \ref{alg:Inner-GI} till \eqref{eq:degen_QP_1} is solved
(fully or partially; refer to Theorem \ref{thm:fin-conv-GI} and Remark
\ref{rem:more_on_inner_GI}), another approach is to try to solve\textrm{
$P_{\scriptsize\cone(-N)}(c_{p})$ }to optimality as far as possible
using a primal active set QP algorithm before the conclusion of Algorithm
\ref{alg:degen_inner_GI}. We now show how to do so.
\begin{algorithm}
\label{alg:degen-GI-a-plus}(After step (a) of Algorithm \ref{alg:degen_inner_GI})
Let $J_{0}$ and $N_{0}$ be the values of $J$ and $N$ at the start
of Algorithm \ref{alg:degen_inner_GI}, and let $J_{1}$ and $N_{1}$
be the values of $J$ and $N$ after step (a) of Algorithm \ref{alg:degen_inner_GI}
is performed. Note that $J_{1}\subset J_{0}$. This algorithm tries
to find the active set $J$ and the corresponding $N$ so that $P_{\scriptsize\cone(-N_{0})}(c_{p})=P_{\scriptsize\cone(-N)}(c_{p})$.
The direction $z=c_{p}-P_{\scriptsize\cone(-N)}(c_{p})$ to be used
at step (b) of Algorithm \ref{alg:degen_inner_GI} would then be the
best possible. 

We label this algorithm as step (a$^{+}$), which is to be performed
in Algorithm \ref{alg:degen_inner_GI} after step (a) and before step
(b).

\textbf{(a$^{+}$) Improve step direction}

\texttt{00} Set $J\leftarrow J_{1}$, and set $N$, $Q$, $R$ and
$r$ in a similar manner. Set $y\leftarrow Nr$.

\texttt{01} Loop

$\quad$\textbf{(Choose index to enter basis) }

\texttt{02} $\quad$Find an index $j\in J_{0}\backslash J$ such that
$[-c_{j}]^{T}[c_{p}-y]>0$

\texttt{03} $\quad\quad$If no such $j$ exists, then $y=P_{\scriptsize\cone(-N_{0})}(c_{p})$.
Exit loop.

\texttt{04} $\quad$Let $N_{+}=[\begin{array}{cc}
N & c_{j}\end{array}]$, 

\texttt{05} $\quad$Let $(Q_{+},R_{+})$ be the economy QR factorization
of $N_{+}$. 

\texttt{06} $\quad$Let $r^{+}=R_{+}^{-1}Q_{+}c_{p}$.

$\quad$\textbf{(Remove active indices till $j$ enters basis)}

\texttt{07} $\quad$While $r^{+}\not\leq0$

\texttt{08} $\quad\quad$Set 
\[
t_{3}:=\min_{{r_{i}^{+}>0\atop i\in\{1,\dots,q\}}}\left\{ \frac{-r_{i}}{r_{i}^{+}}\right\} =\frac{-r_{l}}{r_{l}^{+}}.
\]

\texttt{09} $\quad\quad$Let $r=r+t_{3}r^{+}$.

\texttt{10} $\quad\quad$Remove $J(l)$th element from $J$, and 

\texttt{11} $\quad\quad\quad$update $N$, $N_{+}$, $Q$, $Q_{+}$,
$R$, $R_{+}$, $r$ and $q$ accordingly.

\texttt{12} $\quad\quad$Let $r^{+}=R_{+}^{-1}Q_{+}c_{p}$

\texttt{13} $\quad$end While

\texttt{14} $\quad$Let $N\leftarrow N_{+}$, $r\leftarrow r^{+}$,
$Q\leftarrow Q_{+}$, $R\leftarrow R_{+}$, $q\leftarrow q+1$, $y\leftarrow Nr$.

\texttt{15} $\quad$If $y$ is deemed to be satisfactory, then break.

\texttt{16} end loop\end{algorithm}
\begin{rem}
(On Algorithm \ref{alg:degen-GI-a-plus})  Algorithm \ref{alg:degen-GI-a-plus}
describes a primal active set QP strategy for solving \eqref{eq:degen_QP_1},
where we try to find $y\in\cone(-N_{0})$ that minimizes the distance
from $c_{p}$ to $\cone(-N_{0})$. The standard theory of the active
set method implies that the distance $\|c_{p}-y\|$ is strictly decreasing
and also that there are only finitely many combinations of active
indices. Hence Algorithm \ref{alg:degen-GI-a-plus} must terminate
at the optimum $y=P_{\scriptsize\cone(-N_{0})}(c_{p})$ in finitely
many iterations. But the analogue of Theorem \ref{thm:fin-conv-GI}
and Remark \ref{rem:more_on_inner_GI} also apply for the primal active
set QP algorithm: It may be excessive to perform all the required
iterations to solve \eqref{eq:degen_QP_1} to optimality, but any
intermediate solutions can still give good progress towards the original
SIP.

A byproduct of this perspective for solving \eqref{eq:degen_QP_1}
is that the index $j$ that maximizes $[-c_{j}]^{T}[c_{p}-y]$ seems
be a good index to add in line 2 to accelerating convergence. A divide
and conquer approach can be used to find an approximation to $y=P_{\scriptsize\cone(-N_{0})}(c_{p})$
when the number of columns of $N_{0}$ is large. More details on the
primal active set QP method can be found in \cite{NW06} for example.
\end{rem}
We now prove a connection between Algorithm \ref{alg:degen-GI-a-plus}
and the inner GI step.
\begin{thm}
\label{thm:a-plus-inn-GI}(Step (a$^{+}$) and inner GI step) Consider
the first sequence of steps below
\begin{itemize}
\item Step (a) of Algorithm \ref{alg:degen_inner_GI} (degenerate inner
GI step), followed by an iteration of lines 2-15 of Algorithm \ref{alg:degen-GI-a-plus}
(step (a$^{+}$)), followed by the remainder of Algorithm \ref{alg:degen_inner_GI},
\end{itemize}
and the second sequence of steps below
\begin{itemize}
\item Algorithm \ref{alg:degen_inner_GI} (degenerate inner GI step) followed
by an inner GI step (Algorithm \ref{alg:Inner-GI}).
\end{itemize}
An index $j$ gives a violated constraint to perform step (a$^{+}$)
in the first sequence of steps if and only if this index gives a violated
constraint to perform an inner GI step in the second sequence of steps.
Furthermore, the two sequences of steps are equivalent provided that
the same violated constraint is used in both sequences of steps.\end{thm}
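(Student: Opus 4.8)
The plan is to unwind both sequences into explicit linear algebra and then match them state by state. Write the input of the degenerate inner GI step as the s-tuple $(x,J_{0},0,N_{0},Q_{0},R_{0},q_{0})$; since $u=0$ we have $x=x_{*}$ and $c_{i}^{T}x_{*}=b_{i}$ for every $i\in J_{0}$. Step~(a) of Algorithm~\ref{alg:degen_inner_GI} leaves some $J_{1}\subseteq J_{0}$, the submatrix $N_{1}$ (with economy factorization $N_{1}=Q_{1}R_{1}$), and $r_{1}:=R_{1}^{-1}Q_{1}^{T}c_{p}\leq0$; by Proposition~\ref{prop:z-of-degen-GI}, $z_{1}:=(I-Q_{1}Q_{1}^{T})c_{p}=c_{p}-N_{1}r_{1}=c_{p}-P_{\scriptsize\cone(-N_{1})}(c_{p})$. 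In the second sequence, steps~(b)--(c) then produce $\bar{x}=x_{*}+t_{2}z_{1}$ with $t_{2}=(b_{p}-c_{p}^{T}x_{*})/(z_{1}^{T}c_{p})>0$, active matrix $\bar{N}$ (the columns of $N_{1}$ followed by $c_{p}$), and multipliers $\bar{u}=t_{2}\left({-r_{1}\atop 1}\right)\geq0$. Now fix $j\in J_{0}\setminus J_{1}$, so that $c_{j}^{T}x_{*}=b_{j}$. By line~2 of Algorithm~\ref{alg:degen-GI-a-plus}, with $y=N_{1}r_{1}=c_{p}-z_{1}$, the index $j$ is admissible for step~(a$^{+}$) exactly when $[-c_{j}]^{T}(c_{p}-y)>0$, i.e.\ $c_{j}^{T}z_{1}<0$; and $j$ is a violated constraint for an inner GI step from $\bar{x}$ exactly when $c_{j}^{T}\bar{x}-b_{j}<0$, i.e.\ $t_{2}c_{j}^{T}z_{1}<0$, i.e.\ (since $t_{2}>0$) again $c_{j}^{T}z_{1}<0$. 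This settles the ``if and only if'', and for the rest I fix such a $j$.

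Next I would reduce the equivalence to a statement about which columns of $N_{1}$ are dropped. Both sequences end either by declaring infeasibility with the common certificate set $J_{1}\cup\{j,p\}$ --- the step-(b) test $|z|=0$ in sequence~1 and the case $t=\infty$ of the inner GI step in sequence~2 both express that $\cone(-[N_{1}\,c_{j}])$ reaches $c_{p}$ modulo the orthogonal complement of its span --- or else at a common-shaped state. Indeed, for sequence~1, after step~(a$^{+}$) one holds the matrix $N_{2}$ whose columns are the surviving columns of $N_{1}$ followed by $c_{j}$, together with $r^{+}=R_{2}^{-1}Q_{2}^{T}c_{p}\leq0$; applying Proposition~\ref{prop:degen_QP} to steps~(b)--(c) (taking $\tilde{C}=N_{2}$, which satisfies $\tilde{C}^{T}x_{*}=\tilde{b}$ because every column index in $N_{2}$ lies in $J_{0}$ and is active at $x_{*}$) gives $x'=P_{\{y\,:\,C_{J_{2}\cup\{j,p\}}^{T}y\geq b\}}(x_{*})$ with all those constraints tight, where $J_{2}\subseteq J_{1}$ is the surviving index set. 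For sequence~2, Theorem~\ref{thm:fin-conv-GI}(a) gives that the inner GI step returns an s-tuple whose iterate is $P_{\{y\,:\,C_{J'''}^{T}y\geq b\}}(x_{*})$ for some $J'''\subseteq J_{1}\cup\{p,j\}$ with $C_{J'''}^{T}x'''=b_{J'''}$. Moreover $p,j\in J'''$: the $c_{p}$-coordinate of $\bar{R}^{-1}\bar{Q}^{T}c_{j}$ equals $c_{j}^{T}z_{1}/\|z_{1}\|^{2}<0$, so by the ratio test \eqref{eq:inner_GI_stuck} the constraint $p$ is never dropped, while $j$ is the constraint added on the final full step; symmetrically the $c_{j}$-coordinate of $R_{+}^{-1}Q_{+}^{T}c_{p}$ equals $c_{j}^{T}z_{1}/\|(I-Q_{1}Q_{1}^{T})c_{j}\|^{2}<0$, and line~8 of Algorithm~\ref{alg:degen-GI-a-plus} only tests the $N_{1}$-columns, so $c_{j}$ survives step~(a$^{+}$). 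Since an s-pair determines its point, and then the multipliers are pinned down by $x_{*}-x'=-N'u'$, $u'\geq0$ with $N'$ of full column rank, it therefore suffices to show that the set of columns of $N_{1}$ dropped by the while loop of step~(a$^{+}$) equals $J_{1}\setminus J'''$.

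I would prove this last identity by induction on the number of drops, and I expect the inductive step to be the main obstacle. The base case is a clean computation: writing $s:=R_{1}^{-1}Q_{1}^{T}c_{j}$, $\zeta:=(I-Q_{1}Q_{1}^{T})c_{j}$, and $a_{i}:=-(r_{1})_{i}\geq0$, and using the orthogonal-sum splittings $\mspan([N_{1}\,c_{p}])=\mspan(N_{1})\oplus\mathbb{R}z_{1}$ and $\mspan([N_{1}\,c_{j}])=\mspan(N_{1})\oplus\mathbb{R}\zeta$, one finds that the inner GI step from $\bar{x}$ takes a full step (no drop) exactly when $\max_{i}s_{i}/a_{i}\leq\|\zeta\|^{2}/|c_{j}^{T}z_{1}|$ --- which is exactly the condition $r^{+}\leq0$ under which step~(a$^{+}$) makes no drop --- and that when a drop does occur both procedures drop the column attaining $\max_{i}s_{i}/a_{i}$. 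The difficulty is that after a common first drop the second sequence is no longer of the degenerate ($u=0$) form: its primal iterate has moved and its multipliers have become a combination of $\bar{u}$ with $t_{1}\left({-r\atop 1}\right)$, so Proposition~\ref{prop:z-of-degen-GI} does not apply verbatim any more. I plan to get around this by carrying along the invariant that, after dropping any subset $D\subseteq J_{1}$, the ratio test \eqref{eq:inner_GI_stuck} of the inner GI step against the recomputed direction $(I-\bar{Q}\bar{Q}^{T})c_{j}$ selects, among the surviving columns of $N_{1}$, the same index that line~8 of step~(a$^{+}$) selects when (partially) projecting $c_{p}$ onto $\cone(-[N_{1}^{D}\,c_{j}])$; together with the auxiliary invariant that the admissibility inequality $c_{j}^{T}z_{1}^{D}<0$ (with $z_{1}^{D}$ the residual of $c_{p}$ modulo $\mspan(N_{1}^{D})$) is preserved by each drop, which keeps the $c_{p}$- and $c_{j}$-coordinates negative and hence keeps $p$ and $j$ in the respective bases. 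The latter I would obtain from monotonicity of the two active set methods: dropping a column enlarges $\cone(-[N_{1}^{D}\,c_{j}])$ while the dual GI step strictly increases $v(\cdot)$, which forces the generator $-c_{j}$ to keep a strictly positive coefficient, i.e.\ $c_{j}^{T}z_{1}^{D}<0$. Once the synchronization is in hand, $J_{1}\setminus J'''$ coincides with the step-(a$^{+}$) drop set; a final check that the accumulated steps in the two sequences land on the common point $P_{\{y\,:\,C_{J'''}^{T}y\geq b\}}(x_{*})$ is Proposition~\ref{prop:degen_QP} again for sequence~1 and the s-pair property of the inner GI output for sequence~2, after which all remaining components of the two s-tuples agree.
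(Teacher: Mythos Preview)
Your proof of the first claim (the ``if and only if'' for violated constraints) is correct and is exactly the paper's argument: both reduce to the sign of $c_{j}^{T}z_{1}$, using $c_{j}^{T}x_{*}=b_{j}$ and $t_{2}>0$.

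For the second claim you work considerably harder than the paper does. The paper's entire argument is one sentence: after the degenerate step one has $x_{*}-\bar{x}=-\bar{N}\bar{u}$ with $\bar{u}=t_{2}\binom{-r_{1}}{1}$, and ``the equivalences of the two sequences of steps can be observed by noting the similarities between the variables $r$, $r^{+}$ and $t_{3}$ in step~(a$^{+}$) and the variables $-u$ (the first $q$ components only), $r$ and $t_{1}$ in the inner loop of the inner GI step.'' In other words, the paper simply asserts that the two ratio tests $t_{3}=\min_{i}(-r_{i})/r_{i}^{+}$ and $t_{1}=\min_{i}u_{i}^{+}/r_{i}$ run in lockstep because their numerators and denominators correspond, and leaves it at that.

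Your explicit base-case computation, showing that both tests select the column attaining $\max_{i}s_{i}/a_{i}$ and that the two no-drop conditions coincide at $M\leq\|\zeta\|^{2}/|c_{j}^{T}z_{1}|$, is a genuine addition: the corresponding quantities $(r^{+})_{i}=-a_{i}+\beta s_{i}$ and $\bar{r}_{i}=s_{i}-\gamma a_{i}$ are \emph{not} scalar multiples of one another, so the paper's one-line correspondence does need this check. Where you may be overcomplicating things is the inductive step. After a common first drop, the update rules $r\leftarrow r+t_{3}r^{+}$ and $u^{+}\leftarrow u^{+}+t_{1}\binom{-r}{1}$ have the same algebraic form; a direct computation (with your $M=s_{l}/a_{l}$) shows the surviving components of the new $r$ and of the new $-u$ are both proportional to $s_{i}-Ma_{i}$, so the numerator correspondence is preserved up to a positive scalar, while $r^{+}$ and $\bar{r}$ are recomputed from scratch on the same reduced matrix $N^{(1)}$ and hence stand in the same structural relation as before. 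That is the mechanism behind the paper's terse ``similarities'' remark, and it makes the induction more mechanical than your invariant machinery suggests. Your closing reconciliation via Proposition~\ref{prop:degen_QP} and the s-pair property is correct and, again, more explicit than what the paper writes.
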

\begin{proof}
We mark the variables $N_{0}$, $Q_{0}$, $R_{0}$ and $J_{0}$ be
the corresponding parameters before step (a) of Algorithm \ref{alg:degen_inner_GI}
in both sequences of steps. Let the variables $N_{1}$, $Q_{1}$,
$R_{1}$, $J_{1}$ be the corresponding parameters after step (a)
and before the two sequences of steps diverges. 

By the end of step (a) of Algorithm \ref{alg:degen_inner_GI}, we
have found a direction $z=c_{p}-P_{\scriptsize\cone(-N_{1})}(c_{p})$. 

We look at a single iteration in the inner loop in step (a$^{+}$).
Step (a$^{+}$) finds a $j$ so that $c_{j}^{T}(c_{p}-y)>0$. 

Next, we consider the second sequence of steps. By the end of Algorithm
\ref{alg:degen_inner_GI}, we have found an $x$ such that $x^{+}=x+tz$,
where $t=\frac{b-c_{p}^{T}x}{c_{p}^{T}z}>0$. Making use of the connection
$c_{p}-y=z$, we have $-c_{j}^{T}(c_{p}-y)>0$ if and only if $c_{j}^{T}z<0$.
The condition $-c_{j}^{T}(c_{p}-y)>0$ is thus equivalent to 
\[
c_{j}^{T}x^{+}=c_{j}^{T}x+tc_{j}^{T}z=b_{j}+tc_{j}^{T}z<b_{j}.
\]
In other words, the first statement of our result holds. 

To prove the second statement, we look at the first sequence of steps
again. The inner loop of step (a$^{+}$) contains yet another inner
loop in lines 7-13. It suffices to compare this inner loop with the
inner loop within the inner GI step. 

In the second sequence of steps, consider the variables at the end
of performing Algorithm \ref{alg:degen_inner_GI}. From $x^{+}=x+tz$
in Algorithm \ref{alg:degen_inner_GI}, we have $z=\frac{1}{t}(x^{+}-x)$.
From the definition of an s-tuple in Definition \ref{def:s_tuple}(5)(b),
we have $z=-Nu$, where $u=\left({-r\atop 1}\right)\geq0$. The equivalences
of the two sequences of steps can be observed by noting the similarities
between the variables $r$, $r^{+}$ and $t_{3}$ in step (a$^{+}$)
and the variables $-u$ (the first $q$ components only), $r$ and
$t_{1}$ in the inner loop of the inner GI step.
\end{proof}
The elementary result below, which is quite similar to Proposition
\ref{prop:reduc-1}, makes a final point about reducing the problem
size to speed up the algorithm.
\begin{prop}
(Problem size reduction) Let $Q_{0}R_{0}$ be a QR factorization of
$N_{0}$. The columns of $N_{0}$ are indexed by $J_{0}$. Let $\tilde{y}=P_{\scriptsize\cone(-R_{0})}(Q_{0}^{T}c_{p})$.
Then 
\begin{enumerate}
\item There is an index set $J$ and a vector $r<0$ such that $\tilde{y}=(R_{0})_{J}r$,
where $(R_{0})_{J}$ is the matrix formed by the columns of $R_{0}$
indexed by $J$. 
\item $Q_{0}\tilde{y}=P_{\scriptsize\cone(-N_{0})}(c_{p})$.
\end{enumerate}
Lastly, if the QR factorization of $(R_{0})_{J}$ is obtained in the
process of finding $\tilde{y}$, say $(R_{0})_{J}=\tilde{Q}\tilde{R}$,
then $[Q_{0}\tilde{Q}]\tilde{R}$ would be the QR factorization of
$Q_{0}(R_{0})_{J}$.\end{prop}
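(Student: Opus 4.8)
The plan is to exploit that $N_{0}=Q_{0}R_{0}$ with $Q_{0}$ having orthonormal columns, i.e. $Q_{0}^{T}Q_{0}=I$, so that $w\mapsto Q_{0}w$ preserves Euclidean norms and maps $\cone(-R_{0})$ onto $\cone(-N_{0})$: since the $j$th column of $N_{0}$ is $Q_{0}$ applied to the $j$th column of $R_{0}$, every element of $\cone(-N_{0})$ is $Q_{0}w$ for some $w\in\cone(-R_{0})$, and conversely every such $Q_{0}w$ lies in $\cone(-N_{0})$. This reduces the projection onto $\cone(-N_{0})\subseteq\mathbb{R}^{n}$ to a projection in the (typically much smaller) space $\mathbb{R}^{|J_{0}|}$, in the spirit of Proposition \ref{prop:reduc-1}.

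For item (2), I would split $c_{p}=Q_{0}Q_{0}^{T}c_{p}+(I-Q_{0}Q_{0}^{T})c_{p}$ and observe that for every $w\in\cone(-R_{0})$ the cross term $[(I-Q_{0}Q_{0}^{T})c_{p}]^{T}(Q_{0}Q_{0}^{T}c_{p}-Q_{0}w)$ vanishes, because $(I-Q_{0}Q_{0}^{T})Q_{0}=0$. Together with $Q_{0}^{T}Q_{0}=I$ this gives
\[
\|c_{p}-Q_{0}w\|^{2}=\|(I-Q_{0}Q_{0}^{T})c_{p}\|^{2}+\|Q_{0}^{T}c_{p}-w\|^{2}.
\]
The first summand does not depend on $w$, so minimizing the left side over $w\in\cone(-R_{0})$ is equivalent to minimizing $\|Q_{0}^{T}c_{p}-w\|^{2}$, whose minimizer is exactly $\tilde{y}=P_{\cone(-R_{0})}(Q_{0}^{T}c_{p})$. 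Hence $P_{\cone(-N_{0})}(c_{p})=Q_{0}\tilde{y}$, which proves item (2).

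For item (1), I would invoke the standard description of the projection onto a finitely generated (hence closed and polyhedral) cone: $\tilde{y}\in\cone(-R_{0})$, and by a Carath\'{e}odory-type argument it admits a conic representation $\tilde{y}=\sum_{j\in J}\mu_{j}(-(R_{0})_{j})$ of minimal support, in which the vectors $\{(R_{0})_{j}:j\in J\}$ are linearly independent and every $\mu_{j}>0$. Setting $r:=-(\mu_{j})_{j\in J}$ gives $\tilde{y}=(R_{0})_{J}r$ with $r<0$ (the case $\tilde{y}=0$ gives $J=\emptyset$ and the claim is vacuous). The concluding ``lastly'' assertion is then immediate: $Q_{0}(R_{0})_{J}=Q_{0}(\tilde{Q}\tilde{R})=[Q_{0}\tilde{Q}]\tilde{R}$, the factor $\tilde{R}$ is upper triangular since $(R_{0})_{J}=\tilde{Q}\tilde{R}$ is a QR factorization, and $[Q_{0}\tilde{Q}]^{T}[Q_{0}\tilde{Q}]=\tilde{Q}^{T}Q_{0}^{T}Q_{0}\tilde{Q}=\tilde{Q}^{T}\tilde{Q}=I$, so $[Q_{0}\tilde{Q}]\tilde{R}$ is an economy QR factorization of $Q_{0}(R_{0})_{J}$.

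I do not expect a serious obstacle here. The two points that need care are the Carath\'{e}odory/KKT reduction in item (1), which is what produces the strict inequality $r<0$ (not merely $r\leq0$) together with a linearly independent active set, and the verification that $Q_{0}$ genuinely identifies $\cone(-R_{0})$ with $\cone(-N_{0})$ and is an isometry on it, since this is exactly what legitimizes replacing the $n$-dimensional projection by the $|J_{0}|$-dimensional one.
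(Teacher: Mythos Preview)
Your proposal is correct and follows essentially the same approach as the paper: both exploit $Q_{0}^{T}Q_{0}=I$ and the identification $\cone(-N_{0})=Q_{0}\,\cone(-R_{0})$ to reduce the projection to the smaller space. The only cosmetic difference is that you verify item~(2) via the Pythagorean decomposition $\|c_{p}-Q_{0}w\|^{2}=\|(I-Q_{0}Q_{0}^{T})c_{p}\|^{2}+\|Q_{0}^{T}c_{p}-w\|^{2}$, whereas the paper checks the variational inequality $[c_{p}-y]^{T}[v-y]\leq0$ for $v\in\cone(-N_{0})$ directly; your Carath\'{e}odory remark for item~(1) is also slightly more explicit than the paper's one-line justification, but the content is the same.
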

\begin{proof}
Since $\tilde{y}=P_{\scriptsize\cone(-R_{0})}(Q_{0}^{T}c_{p})$, we
have $\tilde{y}\in\cone(-R_{0})$, so there is an index set $J\subset J_{0}$
and a vector $r<0$ such that $\tilde{y}=(R_{0})_{J}r$, giving us
the first conclusion. 

For the second conclusion, let $y=Q_{0}\tilde{y}$. We want to check
that $y=P_{\scriptsize\cone(-N_{0})}(c_{p})$. Any element in $\cone(-N_{0})$
can be written as $Q_{0}R_{0}\tilde{v}$ for some $\tilde{v}\leq0$.
Then 
\begin{eqnarray*}
[c_{p}-y]^{T}[Q_{0}R_{0}\tilde{v}-y] & = & [c_{p}-Q_{0}Q_{0}^{T}c_{p}]^{T}Q_{0}[R_{0}\tilde{v}-\tilde{y}]\\
 &  & +[Q_{0}Q_{0}^{T}c_{p}-y]^{T}Q_{0}[R_{0}\tilde{v}-\tilde{y}].
\end{eqnarray*}
The expressions above can be simplified 
\begin{eqnarray*}
[c_{p}-Q_{0}Q_{0}^{T}c_{p}]^{T}Q_{0}[R_{0}\tilde{v}-\tilde{y}] & = & c_{p}^{T}[I-Q_{0}Q_{0}^{T}]Q_{0}[R_{0}\tilde{v}-\tilde{y}]=0,\\
\mbox{and }[Q_{0}Q_{0}^{T}c_{p}-\tilde{y}]^{T}Q_{0}[R_{0}\tilde{v}-\tilde{y}] & = & [Q_{0}^{T}c_{p}-\tilde{y}]^{T}Q_{0}^{T}Q_{0}[R_{0}\tilde{v}-\tilde{y}]\\
 & = & [Q_{0}^{T}c_{p}-\tilde{y}]^{T}[R_{0}\tilde{v}-\tilde{y}]\leq0.
\end{eqnarray*}
The last inequality comes from the fact that $\tilde{y}=P_{\scriptsize\cone(-R_{0})}(Q_{0}^{T}c_{p})$.
Therefore ${[c_{p}-y]^{T}[v-y]\leq0}$ for all $v\in\cone(-N_{0})$,
which shows that $y=P_{\scriptsize\cone(-N_{0})}(c_{p})$ as needed.
The last statement is straightforward.
\end{proof}
The last result shows us that instead of following the description
of Algorithm \ref{alg:degen-GI-a-plus}, a dimension reduction can
reduce the size of the QP needed to be solved, which leads to a more
efficient algorithm.

\section{Conclusion}

We show how the inner GI step, which is derived from the dual active
set QP algorithm of \cite{Goldfarb_Idnani}, can allow us to keep
a QR factorization of the normals of the active constraints. The cost
of accumulating this QR factorization is actually quite cheap in view
of Remark \ref{rem:cost_GI_step}. Accumulating such information about
the normals of the active constraints accelerate the convergence to
a solution for the simple example in Section \ref{sec:simple-exa}.

Since keeping a large number of recently generated halfspaces is useful
for accelerating convergence, it is natural to look at how we can
keep the largest number of the most useful halfspaces efficiently.
This paper on integrating projection and QP algorithms appears to
be the first effort in this direction. We shall defer more serious
numerical calculations to a later paper.

\bibliographystyle{amsalpha}
\bibliography{refs}

\newcommand{\etalchar}[1]{$^{#1}$}
\providecommand{\bysame}{\leavevmode\hbox to3em{\hrulefill}\thinspace}
\providecommand{\MR}{\relax\ifhmode\unskip\space\fi MR }
\providecommand{\MRhref}[2]{%
  \href{http://www.ams.org/mathscinet-getitem?mr=#1}{#2}
}
\providecommand{\href}[2]{#2}
\begin{thebibliography}{CCC{\etalchar{+}}12}

\bibitem[BB96]{BB96_survey}
H.H. Bauschke and J.M. Borwein, \emph{On projection algorithms for solving
  convex feasibility problems}, SIAM Rev. \textbf{38} (1996), 367--426.

\bibitem[BC11]{BauschkeCombettes11}
H.H. Bauschke and P.L. Combettes, \emph{Convex analysis and monotone operator
  theory in {H}ilbert spaces}, Springer, 2011.

\bibitem[BCK06]{BausCombKruk06}
H.H. Bauschke, P.L. Combettes, and S.G. Kruk, \emph{Extrapolation algorithm for
  affine-convex feasibility problems}, Numer. Algorithms \textbf{41} (2006),
  239--274.

\bibitem[BD85]{BD86}
J.P. Boyle and R.L. Dykstra, \emph{A method for finding projections onto the
  intersection of convex sets in {H}ilbert spaces}, Advances in Order
  Restricted Statistical Inference, Lecture notes in Statistics, Springer, New
  York, 1985, pp.~28--47.

\bibitem[BDHP03]{BDHP03}
H.H. Bauschke, F.~Deutsch, H.S. Hundal, and S.-H. Park, \emph{Accelerating the
  convergence of the method of alternating projections}, Trans. Amer. Math.
  Soc. \textbf{355} (2003), no.~9, 3433--3461.

\bibitem[BR09]{BR09}
E.G. Birgin and M.~Raydan, \emph{{D}ykstra's algorithm and robust stopping
  criteria}, Encyclopedia of Optimization (C.~A. Floudas and P.~M. Pardalos,
  eds.), Springer, US, 2 ed., 2009, pp.~828--833.

\bibitem[BZ05]{BZ05}
J.M. Borwein and Q.J. Zhu, \emph{Techniques of variational analysis}, Springer,
  NY, 2005, CMS Books in Mathematics.

\bibitem[CCC{\etalchar{+}}12]{CensorChenCombettesDavidiHerman12}
Y.~Censor, W.~Chen, P.~L. Combettes, R.~Davidi, and G.T. Herman, \emph{On the
  effectiveness of projection methods for convex feasibility problems with
  linear inequality constraints}, Comput. Optim. Appl. \textbf{51} (2012),
  1065--1088.

\bibitem[Cim38]{Cimmino38}
G.~Cimmino, \emph{Calcolo approssimato per le soluzioni dei sistemi di
  equazioni lineari}, La Ricerca Scientifica XVI \textbf{II} (1938), no.~9,
  326--333.

\bibitem[Com03]{Combettes_blockiter}
P.L. Combettes, \emph{A block-iterative surrogate constraint splitting method
  for quadratic signal recovery}, IEEE Transactions in Signal Processing
  \textbf{51} (2003), no.~7, 1771--1782.

\bibitem[Deu95]{Deutsch95}
F.~Deutsch, \emph{The angle between subspaces of a {H}ilbert space},
  Approximation Theory, Spline Functions and Applications (S.~P. Singh, ed.),
  Kluwer Academic Publ., The Netherlands, 1995, pp.~107--130.

\bibitem[Deu01a]{Deutsch01_survey}
\bysame, \emph{Accelerating the convergence of the method of alternating
  projections via a line search: A brief survey}, Inherently Parallel
  Algorithms in Feasibility and Optimization and their Applications
  (D.~Butnariu, Y.~Censor, and S.~Reich, eds.), Elsevier, 2001, pp.~203--217.

\bibitem[Deu01b]{Deustch01}
\bysame, \emph{Best approximation in inner product spaces}, Springer, 2001, CMS
  Books in Mathematics.

\bibitem[Dyk83]{Dykstra83}
R.L. Dykstra, \emph{An algorithm for restricted least-squares regression}, J.
  Amer. Statist. Assoc. \textbf{78} (1983), 837--842.

\bibitem[ER11]{EsRa11}
R.~Escalante and M.~Raydan, \emph{Alternating projection methods}, SIAM, 2011.

\bibitem[GBH70]{GordonBenderHerman70ART}
R.~Gordon, R.~Bender, and G.T. Herman, \emph{Algebraic reconstruction
  techniques ({ART}) for three-dimensional electron microscopy and x-ray
  photography}, Journal of Theoretical Biology (1970), no.~29, 471--481.

\bibitem[GI83]{Goldfarb_Idnani}
D.~Goldfarb and A.~Idnani, \emph{A numerically stable dual method for solving
  strictly convex quadratic programs}, Math. Programming \textbf{27} (1983),
  1--33.

\bibitem[GK89]{GK89}
W.B. Gearhart and M.~Koshy, \emph{Acceleration schemes for the method of
  alternating projections}, J. Comput. Appl. Math. \textbf{26} (1989),
  235--249.

\bibitem[Gof71]{Goffin71}
J.L. Goffin, \emph{On the finite convergence of the relaxation method for
  solving systems of inequalities}, Ph.D. thesis, University of California,
  Berkeley, 1971.

\bibitem[GP98]{G-P98}
U.M. Garc{\'i}a-Palomares, \emph{A superlinearly convergent projection
  algorithm for solving the convex inequality problem}, Oper. Res. Lett.
  \textbf{22} (1998), 97--103.

\bibitem[GP01]{G-P01}
\bysame, \emph{Superlinear rate of convergence and optimal acceleration schemes
  in the solution of convex inequality problems}, Inherently Parallel
  Algorithms in Feasibility and Optimization and their Applications
  (D.~Butnariu, Y.~Censor, and S.~Reich, eds.), Elsevier, 2001, pp.~297--305.

\bibitem[GPR67]{GPR67}
L.G. Gubin, B.T. Polyak, and E.V. Raik, \emph{The method of projections for
  finding the common point of convex sets}, USSR Comput. Math. Math. Phys.
  \textbf{7} (1967), no.~6, 1--24.

\bibitem[Hal62]{Halperin62}
I.~Halperin, \emph{The product of projection operators}, Acta. Sci. Math.
  (Szeged) \textbf{23} (1962), 96--99.

\bibitem[Han88]{Han88}
S.P. Han, \emph{A successive projection method}, Math. Programming \textbf{40}
  (1988), 1--14.

\bibitem[Hau68]{Haugazeau68}
Y.~Haugazeau, \emph{Sur les in{\'{e}}quations variationnelles et la
  minimisation de fonctionenelles convexes}, Ph.D. thesis, Universit{\'{e}} de
  Paris, 1968.

\bibitem[HC08]{HermanChen08}
G.T. Herman and W.~Chen, \emph{A fast algorithm for solving a linear
  feasibility problem with application to intensity-modulated radiation
  therapy}, Linear Algebra Appl. \textbf{428} (2008), 1207--1217.

\bibitem[Her75]{Herman1975}
G.T. Herman, \emph{A relaxation method for reconstructing objects from noisy
  {X}-rays}, Math. Program. (1975), no.~8, 1--19.

\bibitem[HRER11]{HernandezRamosEscalanteRaydan2011}
L.~M. Hern{\'{a}}ndez-Ramos, R.~Escalante, and M.~Raydan, \emph{Unconstrained
  optimization techniques for the acceleration of alternating projection
  methods}, Numer. Funct. Anal. Optim. \textbf{32} (2011), no.~10, 1041--1066.

\bibitem[Idn80]{Idnani_thesis}
A.U. Idnani, \emph{Numerically stable dual projection methods for solving
  positive definite quadratic programs}, Ph.D. thesis, City University of New
  York, 1980.

\bibitem[Kac37]{Kaczmarz}
S.~Kaczmarz, \emph{Angen{\"{a}}herte aufl{\"{o}}sung von systemen linearer
  gleichungen}, Bull. Internat. Acad. Pol. Sci. Lett. Ser. A \textbf{35}
  (1937), 355--357.

\bibitem[NW06]{NW06}
J.~Nocedal and S.J. Wright, \emph{Numerical optimization}, 2 ed., Springer,
  2006.

\bibitem[Pan14]{improved_SIP}
C.H.J. Pang, \emph{Improved analysis of algorithms based on supporting
  halfspaces and quadratic programming for the convex intersection and
  feasibility problems}, (preprint) (2014).

\bibitem[Pan15a]{Pang_subBAP}
\bysame, \emph{Accelerating the alternating projection algorithm for the case
  of affine subspaces using supporting hyperplanes}, Linear Algebra Appl.
  \textbf{469} (2015), 419--439.

\bibitem[Pan15b]{cut_Pang12}
\bysame, \emph{Set intersection problems: Supporting hyperplanes and quadratic
  programming}, Math. Programming \textbf{149} (2015), 329--359.

\bibitem[Pie81]{DePierro81}
A.R.~De Pierro, \emph{Metodos de proje\c{c}{\~a}o para a resol\c{c}{\~a}o de
  sistemas gerais de equa\c{c}{\~o}es alg{\'e}bricas lienaers}, Ph.D. thesis,
  Instituto de Matem{\'a}tica, Universidade a Federal do Rio de Janeiro (IM-
  UFRJ), 1981.

\bibitem[Pie84]{Pierra84}
G.~Pierra, \emph{Decomposition through formalization in a product space}, Math.
  Programming \textbf{28} (1984), 96--115.

\bibitem[San87]{DosSantos87}
L.~T.~Dos Santos, \emph{A parallel subgradient projections method for the
  convex feasibility problem}, J. Comput. Appl. Math. \textbf{18} (1987),
  307--320.

\bibitem[vN50]{Neumann50}
J.~von Neumann, \emph{Functional operators. {II}. {T}he geometry of orthogonal
  spaces.}, {Annals of Mathematics Studies, no. 22.}, Princeton University
  Press, Princeton, NJ, 1950, [This is a reprint of mimeograghed lecture notes
  first distributed in 1933.].

\end{thebibliography}

\end{document}